\newtheorem{thm}{Theorem}[section]
\newtheorem{lem}{Lemma}[section]
\theoremstyle{remark}
\newtheorem{rem}{Remark}[section]
\newtheorem*{rem*}{Remark}
\newcommand{\ed}{\end {document}}
\newcounter{smalllist}
\title[Counterexample for higher order]{Remarks on the Bernstein inequality  for higher
order operators and related results}
\author[D. Li]{Dong Li}
\address{D. Li, SUSTech International Center for Mathematics, and Department of Mathematics,  Southern University of Science and Technology, Shenzhen, China}%
\email{lid@sustech.edu.cn}
\author[Y. Sire]{Yannick Sire}
\address{Y. Sire, Department of Mathematics, Johns Hopkins University, Baltimore, MD 21218, USA}
\email{ysire1@jhu.edu}
\begin{document}

\begin{abstract}
This note is devoted to several results about frequency localized functions and associated Bernstein inequalities for higher order operators. In particular,  we construct some counterexamples for the frequency-localized Bernstein inequalities for higher order Laplacians. We show also that the heat semi-group associated to powers larger than one of the laplacian  does not satisfy the strict maximum principle in general. Finally, in a suitable range we provide several positive results.
\end{abstract}
\maketitle

\tableofcontents

\section{Introduction}
This note is devoted to several results about frequency localized functions and
associated Bernstein inequalities for higher order operators. We consider a class
of fractional Laplacian operators acting on frequency localized functions on the whole space $\mathbb R^d$ or the periodic torus.
 To fix the notation, we use the following convention for Fourier
transform on $\mathbb R^d$, $d\ge 1$:
\begin{align}
& f(x) = \frac 1 {(2\pi)^d} \int_{\mathbb R^d} \widehat f(\xi) e^{i \xi \cdot x} d \xi;\\
& \widehat f(\xi) = \int_{\mathbb R^d} f(x) e^{- i \xi \cdot x } dx.
\end{align}
For $s>0$, we define the fractional Laplacian operator $\Lambda^s = (-\Delta)^{\frac s2}$ via
the Fourier transform:
\begin{align}
\widehat{ \Lambda^s f } (\xi) = |\xi|^s \widehat f (\xi), \qquad \xi \in \mathbb R^d.
\end{align}
In yet other words $\Lambda^s$ corresponds to the Fourier multiplier $|\xi|^s$.  Note that
for $s=2$ we have $ -\Lambda^s = \Delta$, i.e. the usual Laplacian operator.  For $0<s\le 2$,
it is known (cf. \cite{CL12}, \cite{L12}, \cite{Wu1} and the references therein) that the following 
frequency-localized 
Bernstein-type inequality hold:  for $1<p<\infty$ and any band-limited
 $f \in L^p(\mathbb R^d, \mathbb R)$ with
\begin{align} \label{c1}
\text{supp}(\widehat f) \subset \{ \xi:\;
\gamma_1 \le |\xi| \le \gamma_2\},
\end{align}
 there are  constants $A_1>0$, $A_2>0$ depending only on $(d,p, s, \gamma_1,\gamma_2)$ such that
\begin{align}
A_2 \| f\|_{p}^p\le \int_{\mathbb R^d} (\Lambda^{s} f) |f|^{p-2} f dx \le A_1  \| f\|_{p}^p
=A_1 \int_{\mathbb R^d} |f|^p dx.
\label{Bern1}
\end{align}
Note that for $p=2$, the above inequality is trivial thanks to the usual Plancherel theorem. 
The main point of \eqref{Bern1} is that it continues to hold for $p\ne 2$ where the Fourier support
of the associated functions  have nontrivial overlapping interactions.

By a scaling argument, if $h\in \mathcal S(\mathbb R^d)$ has frequency localized into
$\{ |\xi| \sim N \}$ where $N\gg 1$, then it follows from 
\eqref{Bern1} that (below $0<s\le 2$ and $1<p<\infty$)
\begin{align}
\int_{\mathbb R^d} \Lambda^s h  |h|^{p-2} h dx \ge \; \mathrm{const} \cdot  N^s \| h \|_p^p.
\end{align}
Such powerful estimates have important applications in the regularity theory of fluid dynamics
equations (cf. \cite{Wu1}).  For example, consider the dissipative two-dimensional surface quasi-geostrophic equation 
\begin{align}
\partial_t \theta = - \Lambda^s \theta + \Lambda^{-1} \nabla^{\perp} \theta \cdot \nabla \theta,
\end{align}
where $0<s\le 2$. 
Applying  the Littlewood-Paley project $P_j$ which is localized to $\{|\xi| \sim 2^j \}$ and calculating
the $L^p$ norm of $P_j \theta$, we obtain
\begin{align}
\frac 1 p \partial_t ( \| P_j \theta \|_p^p)
&= - \int_{\mathbb R^d} (\Lambda^s P_j \theta) |P_j \theta|^{p-2} P_j \theta dx +
\text{Nonlinear terms} \\
&\le - \mathrm{const} \cdot 2^{js} \| P_j \theta \|_p^p +
\text{Nonlinear terms}, \qquad (\text{by Bernstein}).
\end{align}
From this and using additional (nontrivial) commutator estimates, one can deduce fine regularity 
results in various critical and subcritical Besov spaces (see recent \cite{Li21} for an optimal Gevrey
regularity result and the references therein for earlier results). On the other hand, it has been long speculated\footnote{We would
like to thank Professor Jiahong Wu for raising this intriguing question.}
 whether the above Bernstein inequalities also hold for higher order Laplacian operators 
 $\Lambda^s$ for $s>2$. The purpose of this note is to demonstrate some counterexamples
 around these higher operators $\Lambda^s$.   Our main results are the following.
 
 \begin{itemize}
 \item Biharmonic operator.  See Theorem \ref{t1}. We show via an explicit construction the
 failure of Bernstein inequalities for the biharmonic operator $\Delta^2$ with $p=4$.
 \item Lack of positivity for higher order $e^{-\Lambda^s}\delta_0$, $s>2$. See Theorem \ref{thm2.1}.
 We give two proofs to show the general lack of positivity for the higher order heat
 operators. Some sharp asymptotic decay at spatial infinity is also shown. 
 \item Counterexamples for Bernstein for $s>2$, $p\in (1, p_0)$ or $p \in (p_1, \infty)$ for
 some $p_0<2<p_1$.  See Theorem \ref{thm2.2} and Theorem \ref{thm3.3}.  For general operators $\Lambda^s$ with $s>2$, we show
 generic failure of Bernstein inequalities for  $p=1+$ or $p=\infty-$. 
 \item Some periodic Bernstein inequalities for $\Lambda^s$, $0<s\le 2$.  See Theorem \ref{thm3.1}
 and Theorem \ref{thm4.2a}.
 By using a nontrivial complex interpolation argument together with some concentration 
 inequality, we prove a family of Bernstein inequalities for mean-zero periodic functions for all
 $p\in (1,\infty)$. We also show frequency-localized versions in Theorem \ref{thm4.2a}.
 \item A Liouville theorem for $\Lambda^s$, $s>0$. See Theorem \ref{thm5.1}. We prove a rigidity
 type for the ancient solutions to a fractional heat equation. 
 \end{itemize}
 
 The rest of this note is organized according to the above summary. 
 
 \subsection*{Notation} 
  For any two positive quantities $X$ and $Y$, we write $X \lesssim Y$ or $X=O(Y)$ if 
 $X\le C Y$ for some unimportant constant $C>0$.  We write $X \ll Y$ if 
 $X \le c Y$ for some sufficiently small constant $c>0$.  The needed smallness is clear
 from the context. We write $f \in L^p(\Omega, Y)$ if $f:\; \Omega \to Y$ and is in $L^p$. For example $f \in L^2(\mathbb R^3, \mathbb R^2)$ means $f$ is $\mathbb R^2$-valued and
 \begin{align}
 \| f \|_2^2 = \int_{\mathbb R^3} |f|^2 dx = \int_{\mathbb R^3}
 (f_1^2 +f_2^2) dx <\infty, \qquad \text{here $f=(f_1, f_2)^{\mathrm T} $}.
 \end{align}
 
For a complex number $z=a+bi$ with $a, b \in \mathbb R$, we denote $\mathrm{Re}(z)=a$
and $\mathrm{Im}(z) = b$. 

We denote the usual sign function $\mathrm{sgn}(x) = 1$ for $x>0$, $-1$ for $x<0$ and $0$ if $x=0$.

We use the Japanese bracket notation $\langle x \rangle = \sqrt{1+|x|^2}$ for any
$x \in \mathbb R^d$, $d\ge 1$.
 
 \section{Failure of Bernstein inequalities}

\begin{thm} \label{t1}
Let the dimension $d\ge 1$. 
 There exists a sequence of Schwartz functions
$f_j:\; \mathbb R^d \to \mathbb R$ with frequency localized around $N_j \to \infty$, such that 
\begin{align*}
-C_2< \frac{\int_{\mathbb R^d } \Delta^2  f_j f_j^3 dx } { N_j^4 \int_{\mathbb R^d} f_j^4 dx } <-C_1<0.
\end{align*}
In the above $C_1>0$, $C_2>0$ are constants depending only on $d$. More precisely, the frequency support of $f_j$
satisfies
\begin{align*}
\mathrm{supp}(\widehat{f_j} ) \subset \{ \xi:\,  \alpha_1 N_j < |\xi| < \alpha_2 N_j \},
\end{align*}
where $\alpha_1>0$, $\alpha_2>0$ are constants depending only on $d$. 
\end{thm}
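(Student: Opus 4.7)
My plan is to construct explicit wavepackets that superpose two monochromatic modes at frequencies $N_j$ and $3N_j$ and reduce the theorem to the sign of an explicit rational function in a single parameter $\alpha$. Since the ratio $\int\Delta^2 f\cdot f^3\,dx/(N^4\|f\|_4^4)$ is invariant under the dilation $f(x)\mapsto f(\lambda x)$, it suffices to exhibit one Schwartz profile with negative ratio at some fixed frequency; the full sequence $f_j$ is then obtained by scaling.

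Concretely, I take $\phi\in\mathcal S(\mathbb R^d,\mathbb R)$ a fixed band-limited bump with $\widehat\phi$ smooth, real-valued, and supported in the unit ball (the inverse Fourier transform of an even nonnegative $C_c^\infty$ function) and define
\[
f_j(x):=\phi(x)\bigl[\cos(N_j x_1)+\alpha\cos(3N_j x_1)\bigr]
\]
for a scalar $\alpha\in\mathbb R$ and integers $N_j\to\infty$. Then $\widehat{f_j}$ sits in a $1$-neighborhood of $\{\pm N_j e_1,\pm 3N_j e_1\}$, which lies in the annulus $\{\tfrac12 N_j<|\xi|<4N_j\}$ for $N_j\ge 3$, so one may take $\alpha_1=\tfrac12$, $\alpha_2=4$. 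A Leibniz expansion yields
\[
\Delta^2 f_j(x)=N_j^4\phi(x)\bigl[\cos(N_j x_1)+81\alpha\cos(3N_j x_1)\bigr]+R_j(x),\qquad\|R_j\|_{L^\infty}\lesssim N_j^3,
\]
the remainder $R_j$ collecting all Leibniz terms in which at least one derivative falls on $\phi$. Combining this with the elementary trigonometric averages $\langle\cos^4\theta\rangle=3/8$, $\langle\cos^3\theta\cos 3\theta\rangle=1/8$, $\langle\cos^2\theta\cos^2 3\theta\rangle=1/4$, $\langle\cos\theta\cos^3 3\theta\rangle=0$, and averaging against the slowly-varying weight $\phi^4$ by non-stationary phase, I expect
\[
\frac{\int\Delta^2 f_j\cdot f_j^3\,dx}{N_j^4\,\|f_j\|_4^4}\;\xrightarrow[N_j\to\infty]{}\;\frac{3+84\alpha+492\alpha^2+243\alpha^4}{3+4\alpha+12\alpha^2+3\alpha^4}.
\]

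Choosing $\alpha=-\tfrac{1}{10}$, the numerator equals $3-8.4+4.92+0.0243=-0.4557\ldots<0$ while the denominator equals $3-0.4+0.12+0.0003=2.7203\ldots>0$, so the limiting ratio is $\approx -0.167$. Hence for all $j$ large the ratio lies in $(-1,-\tfrac{1}{10})$, yielding the theorem with $C_1=\tfrac{1}{10}$ and $C_2=1$ (constants independent of $d$). The main obstacle is the asymptotic analysis: one must justify (i) that $|\int R_j\cdot f_j^3\,dx|=O(N_j^3)$, so this remainder is lower order than the $N_j^4$ main term; and (ii) that the oscillatory cross-integrals $\int \phi^4(x)\cos(kN_j x_1)\,dx$ with $k\ne 0$ decay faster than any polynomial in $N_j$, by iterated integration by parts using the Schwartz character of $\phi$. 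Both are standard non-stationary-phase estimates, but the bookkeeping must be organized carefully so that no hidden $O(N_j^4)$-contribution is overlooked; once this is in place, the strict negativity of the leading-order polynomial forces the claimed two-sided bounds.
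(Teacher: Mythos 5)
Your construction is correct, and it follows a genuinely different route from the paper. I checked the key computation: with $g(\theta)=\cos\theta+\alpha\cos 3\theta$ one has $g^{(4)}(\theta)=\cos\theta+81\alpha\cos 3\theta$, and the trigonometric averages you quote give $\langle g^{(4)}g^{3}\rangle=\tfrac18\bigl(3+84\alpha+492\alpha^{2}+243\alpha^{4}\bigr)$ and $\langle g^{4}\rangle=\tfrac18\bigl(3+4\alpha+12\alpha^{2}+3\alpha^{4}\bigr)$, so at $\alpha=-\tfrac1{10}$ the limiting ratio is indeed $\approx-0.17$; the Leibniz remainder is $O(N_j^{3})$ because $N_j^{4}$ only arises when all four $x_1$-derivatives fall on the oscillation, and since you chose $\widehat\phi$ supported in the unit ball, $\widehat{\phi^{4}}$ is supported in $\{|\xi|\le 4\}$, so the nonzero-harmonic cross integrals $\int\phi^{4}\cos(2mN_jx_1)\,dx$ vanish identically once $N_j\ge 3$ — the bookkeeping you defer is therefore genuinely routine (in fact exact, not merely non-stationary phase). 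The paper instead works from a fixed real-space profile: it shows $\int f^{(4)}f^{3}=3\int f^{2}(f'')^{2}-2\int(f')^{4}<0$ for $f(x)=\log(1+x^{2})$ via contour integrals (Appendix A), then cuts off in space at scale $R$ (the mild logarithmic growth being essential for the cutoff not to destroy the sign), truncates in frequency, rescales to frequency $N_j$, and tensors with a band-limited $\psi(x_2)$ for $d\ge 2$. Your two-mode wavepacket with the $1$-versus-$81$ weighting of the fourth derivative reduces everything to the sign of an explicit quartic in $\alpha$, avoids the contour integration and the delicate spatial-localization issue the paper has to discuss, and produces frequency-localized examples in all dimensions at once with dimension-independent constants; the paper's approach, in exchange, exhibits a single explicit unlocalized function with a closed-form negative value. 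The only cosmetic caveat is that your bound holds for all sufficiently large $j$ (drop finitely many terms), which is exactly what the theorem requires.
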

\begin{rem}
By a perturbative argument, one can also show counterexamples for $\Lambda^s$,
$|s-4| \ll1$. 
\end{rem}

\begin{lem} \label{a1}
Consider $f_1(x) = \log (1+x^2)$, $x \in \mathbb R$. Denote $f_1^{(4)} (x)
= \frac {d^4} {dx^4} ( f_1(x) )$. Then 
\begin{align*}
I_1= \int_{\mathbb R} f_1^{(4)}(x)  f_1(x)^3 dx <0.
\end{align*}
\end{lem}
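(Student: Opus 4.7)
The plan is to reduce $I_1$ via integration by parts to integrals involving only $f_1$, $f_1'$, and $f_1''$, then evaluate those explicitly through the substitution $x=\tan\theta$ and the classical Fourier expansion of $\log(2\cos\theta)$.

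First I would integrate by parts twice. Since $f_1(x)\sim 2\log|x|$ while $f_1^{(k)}(x)=O(|x|^{-k})$ for $k\ge 1$, all boundary contributions vanish and
\[
I_1=\int_{\mathbb R}f_1''(f_1^3)''\,dx=6\int_{\mathbb R}f_1(f_1')^2 f_1''\,dx+3\int_{\mathbb R}f_1^2(f_1'')^2\,dx.
\]
Using $(f_1')^2 f_1''=\tfrac13 \tfrac{d}{dx}(f_1')^3$ and one further integration by parts on the first integral gives
\[
I_1=-2\int_{\mathbb R}(f_1')^4\,dx+3\int_{\mathbb R}f_1^2(f_1'')^2\,dx.
\]
Under the substitution $x=\tan\theta$, with $\log(1+x^2)=-2\log\cos\theta$ and $\tfrac{1-x^2}{1+x^2}=\cos(2\theta)$, the first integral reduces to $32\int_0^{\pi/2}\sin^4\theta\cos^2\theta\,d\theta=\pi$, while the second becomes $J:=32\int_0^{\pi/2}\cos^2\theta\cos^2(2\theta)\log^2\cos\theta\,d\theta$.

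Next, expanding $\cos^2\theta\cos^2(2\theta)=\tfrac14+\tfrac38\cos(2\theta)+\tfrac14\cos(4\theta)+\tfrac18\cos(6\theta)$ and using the classical Fourier series $\log(2\cos\theta)=\sum_{k\ge 1}\frac{(-1)^{k-1}}{k}\cos(2k\theta)$ together with orthogonality on $[0,\pi/2]$, the moments $\int_0^{\pi/2}\log^2\cos\theta\cos(2m\theta)\,d\theta$ for $m=0,1,2,3$ can be evaluated in closed form. Substituting them back produces
\[
I_1=\pi^3+12\pi\log^2 2-14\pi\log 2-\tfrac{41\pi}{6}.
\]
To conclude one verifies this expression is negative: using the bounds $\pi^2<9.87$ and $\log 2\in(0.693,0.694)$ one checks that $14\log 2-12\log^2 2>3.93$ while $\pi^2-\tfrac{41}{6}<3.04$, which yields $I_1/\pi<-0.8$.

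The main obstacle is the explicit evaluation of $J$: none of the individual Fourier moments is deep, but obtaining the final closed form requires tracking three separate nonzero moments of $\log^2\cos\theta$ together with the constant contribution, so the bookkeeping is nontrivial. The numerical verification at the end is also somewhat delicate, since the positive and negative contributions in $I_1/\pi$ differ by only about $0.9$, so the constants $\pi$ and $\log 2$ must be controlled to three or four decimal digits.
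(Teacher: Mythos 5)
Your proposal is correct, and it reproduces the paper's structure up to the key reduction: the double integration by parts giving $I_1=-2\int(f_1')^4+3\int f_1^2(f_1'')^2$ is exactly the paper's first step, and your final closed form $I_1=\pi^3+12\pi\log^2 2-14\pi\log 2-\tfrac{41\pi}{6}\approx-2.83$ agrees with the paper's value $-\tfrac{41}{6}\pi+\pi^3+(\log 4)(-7+\log 64)\pi$. Where you genuinely diverge is in the evaluation of the two integrals: the paper computes $2\int(f_1')^4=2\pi$ and $3\int f_1^2(f_1'')^2$ by contour integration (residues of $(\log(z+i))^2(1+z^2)^{-1}$ and $\log(z+i)(1+z^2)^{-n}$, together with a recursion in the exponent $n$, carried out in Appendix A), whereas you substitute $x=\tan\theta$ and expand against the classical series $\log(2\cos\theta)=\sum_{k\ge1}\tfrac{(-1)^{k-1}}{k}\cos 2k\theta$, reducing everything to the moments $M_m=\int_0^{\pi/2}\log^2\cos\theta\,\cos(2m\theta)\,d\theta$, $m=0,\dots,3$; these are indeed elementary (orthogonality plus sums like $\sum_k\tfrac1{k(k+m)}$), and carrying them out does give $J=\tfrac{\pi^3}{3}+4\pi\log^2 2-\tfrac{14}{3}\pi\log 2-\tfrac{29}{18}\pi$, hence your stated value. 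Your real-variable route avoids branch-cut bookkeeping and the paper's recursive reduction of $I_n$, at the cost of the Fourier-moment bookkeeping you acknowledge; you should also say a word justifying termwise integration of the squared series (Parseval for the $L^2$ series of $\log(2\cos\theta)$ suffices). Two small points: in the final numerical check, the crude bracket $\log 2\in(0.693,0.694)$ alone gives only $14\log 2-12\log^2 2\ge 3.92$; to reach your claimed $>3.93$ use four digits of $\log 2$ or note that $14t-12t^2$ is decreasing on this interval and evaluate at $t=0.694$. Also note the paper offers, in a remark, an alternative purely numerical shortcut (sign analysis of $1-6x^2+x^4$ plus a finite-interval estimate), which is in the same spirit of avoiding the exact constant but cruder than either exact computation.
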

\begin{rem}
One can take for example $f_2(x)= x+e^{-x^2}$ to obtain 
\begin{align}
\int f_2^{(4)}(x) (f_2(x))^3 dx \approx -2.47784<0.
\end{align}
However the issue with $f_2$ is that it is not amenable to localization. Namely if we consider
\begin{align}
\int (f_2(x) \phi(\frac x R) )^{(4)} ( f_2(x) \phi(\frac x R ) )^3
\end{align}
for a bump function $\phi$ and $R$ large, then the main order term is
\begin{align}
\int ( x \phi(\frac x R) )^{(4)} ( x \phi(\frac x R ) )^3 dx
=R \int ( x \phi(x) )^{(4)} ( x \phi (x) )^3 dx 
\end{align} 
which may not take a favorable sign. This subtle issue disappears for the function
$f_1(x)=\log (1+x^2)$ due to its mild growth at spatial infinity. 
\end{rem}
\begin{rem}
Interestingly, if we work with $\frac {d^8} {dx^8}$ instead of $\frac {d^4} {dx^4}$, then
we have for $f_2(x) = x+e^{-x^2}$, 
\begin{align}
\int f_2^{(8)} (x) (f_2(x) )^3 dx \approx -219.804<0.
\end{align}
One may then take $f_{2,R}(x) = f_2(x) \phi(x/R)$ for $R$ sufficiently large to show
\begin{align}
\int f_{2,R}^{(8)} (x) (f_{2,R}(x) )^3 dx <0.
\end{align}
This can be used to construct frequency localized counterexamples for $
\Lambda^8=(-\partial_{xx})^4$.
\end{rem}

\begin{rem}
To obtain $I_1<0$, we can also adopt a more \emph{numerical} approach in lieu of
exact contour integral computation.  To this end, denote 
\begin{align}
g(x)= f_1^{(4)}(x) f_1(x)^3 = - 12 \frac {(1-6x^2+x^4) (\log (1+x^2) )^3}
{(1+x^2)^4}.
\end{align}
A schematic drawing of $g(x)$ for $x\in [0, 0.5]$ and $x \in [0,10]$ can be found
in the figures below.
\begin{figure}[!h]
\includegraphics[width=0.4\textwidth]{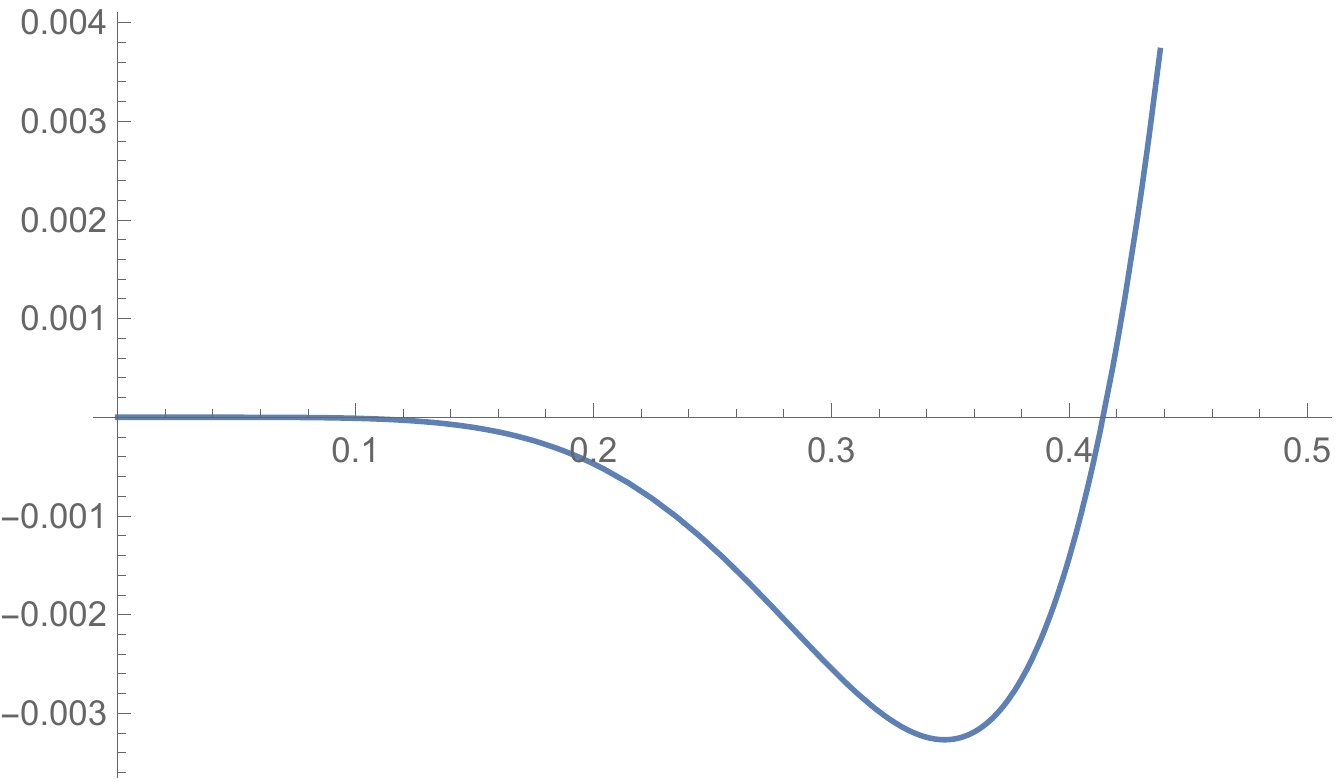}
\includegraphics[width=0.4\textwidth]{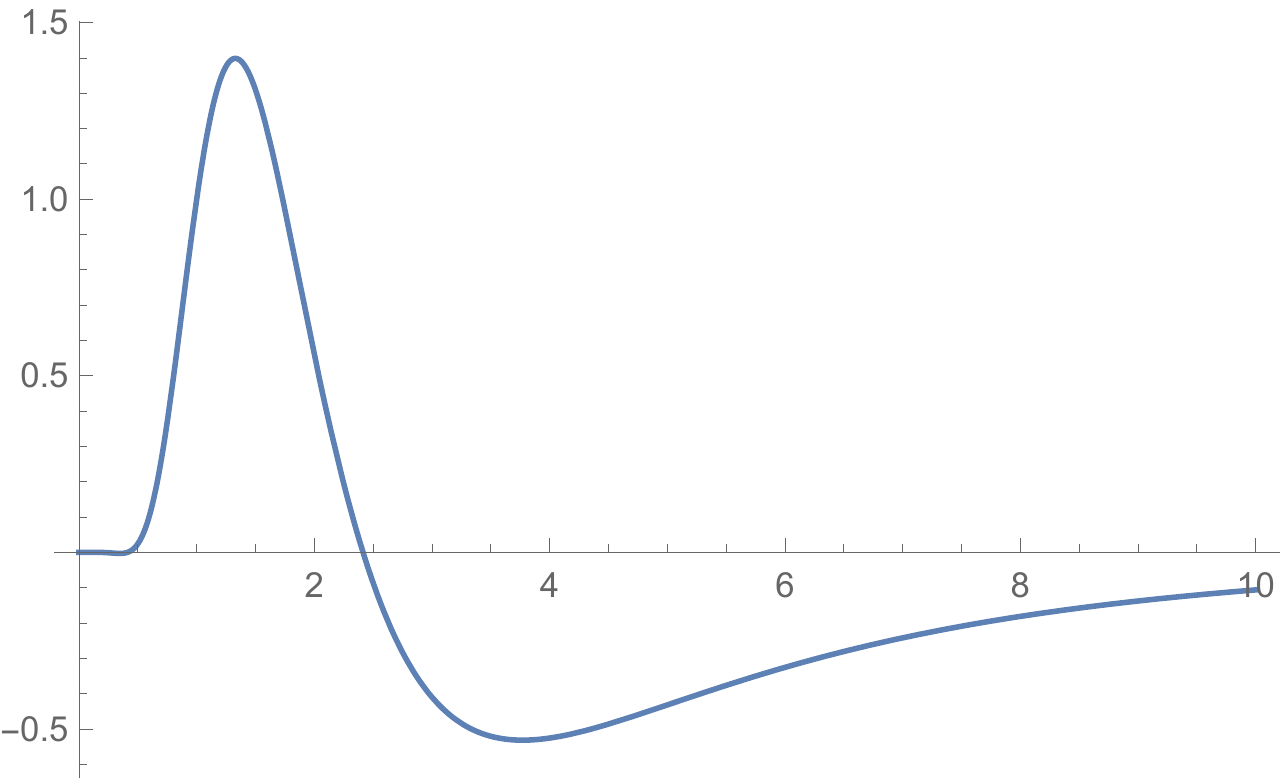}
\caption{The plot of $g(x)$ for $0\le x \le 0.4$  and $0\le x \le 10$}\label{fig:0}
\end{figure}
By examining the polynomial $1-6x^2+x^4$ in the definition of $g(x)$, it is
easy to check that $g(x)>0$ for $x\in (\sqrt 2 -1, \sqrt 2 +1)$ and $g(x)<0$
for $x<\sqrt 2-1$ or $x>\sqrt 2+1$.  In particular
\begin{align}
I_1 <2 \int_{0\le x \le 10} g(x) dx \approx -1.65835.
\end{align}

\end{rem}

\begin{proof}
To ease the notation we write $f_1$ as $f$ and $\int_{\mathbb R} dx $ as $\int$.  By successive integration by parts, we have
\begin{align*}
I_1&= \int f^{\prime\prime} (f^3)^{\prime\prime} = \int f^{\prime\prime} (3f^2 f^{\prime})^{\prime} \\
& = 3 \int f^2 (f^{\prime\prime})^2 + \int 6 f^{\prime\prime} (f^{\prime})^2 f \\
& = 3 \int f^2 (f^{\prime\prime})^2 -2 \int (f^{\prime})^4.
\end{align*}
Note that $f^{\prime}(x)=2x/(1+x^2)$. By a contour integral computation, it is not difficult to check
that 
\begin{align*}
2 \int (f^{\prime})^4 = 2\pi.
\end{align*}
On the other hand (see Appendix A), we have
\begin{align}  \label{2.11A}
3 \int f^2 (f^{\prime\prime})^2 =
-\frac {29}6 \pi + \pi^3 + (\log 4) (-7+ \log 64) \pi.
\end{align}
Thus 
\begin{align*}
I_1 =  -\frac {41}6 \pi + \pi^3 + (\log 4)  (-7 +\log 64) \pi \approx -2.83.
\end{align*}
\end{proof}
We now complete the proof of Theorem \ref{t1}.
\begin{proof}[Proof of Theorem \ref{t1}]
We proceed in several steps.

Step 1. We first construct $f_j$ in the one dimensional case. 
To ease the notation we shall denote
\begin{align*}
I(f) =  \int_{\mathbb R} \partial_x^4 f f^3 dx.
\end{align*}
We choose $f_1$ as in Lemma \ref{a1}.  Clearly
\begin{align*}
I(f_1) = \int_{\mathbb R} \partial_x^4 f_1 f_1^3 dx \approx -2.83 <0.
\end{align*}
Define  for $R\ge 2$
\begin{align*}
f_{R}(x) = f_1(x) \phi( x /R) = \log(1+x^2) \phi( x/ R),
\end{align*}
where $\phi \in C_c^{\infty}(\mathbb R)$ is such that $\phi(z)=1$ for $|z|  \le 1$ and 
$\phi(z)=0$ for $|z| \ge 2$. By taking $R$ sufficiently
large, it is not difficult to check that $I(f_R)<0$.
As a matter of fact $I(f_R)\to I(f_1)$ as $R \to \infty$.  We now fix $R=R_0$ such that $I_{R_0}<0$. 
Clearly $f_{R_0} \in C_c^{\infty}(\mathbb R)$. 

Next we take $\epsilon>0$ and define $h_{\epsilon} \in \mathcal S(\mathbb R)$ such that
\begin{align*}
\widehat{h_{\epsilon}} (\xi) = \phi(\epsilon \xi) 
\left( 1-  \phi \Bigl( \frac {\xi} {\epsilon} \Bigr)  \right) \widehat{f_{R_0}}
(\xi), \qquad \xi \in \mathbb R.
\end{align*}
Clearly  $I(h_{\epsilon}) \to I(f_{R_0})$ as $\epsilon \to 0$. Thus we can fix $\epsilon_0>0$ sufficiently small such that $I(h_{\epsilon_0} ) <0$. 

Finally we define $f_j \in \mathcal S(\mathbb R)$ such that
\begin{align*}
\widehat{f_j}(\xi) = N_j^{-\frac 34} \widehat{h_{\epsilon_0} } \Bigl( \frac {\xi} {N_j} \Bigr).
\end{align*}
On the real side, we have
\begin{align*}
f_j(x) = N_j^{\frac 14}  h_{\epsilon_0} ( N_j x).
\end{align*}
Apparently $\|f_j \|_4  = \| h_{\epsilon_0} \|_4$ for all $j$.   Clearly
 $f_j$ satisfies the desired constraints in dimension $d=1$.

Step 2. Higher dimensions.  With no loss we consider dimension $d=2$. The case for $d\ge 3$ is 
similar and omitted.  Define
\begin{align*}
f_j (x_1, x_2) = N_j^{\frac 14} h_{\epsilon_0} (N_j x_1) \psi (x_2),
\end{align*}
where $h_{\epsilon_0}$ was specified in Step 1, and $\psi\in \mathcal S(\mathbb R)$ is chosen
to have frequency localized to $\frac 12 \le |\xi| \le 1$.  Clearly
\begin{align*}
\int_{\mathbb R^2} \Delta^2 f_j f_j^3 dx 
& = \int_{\mathbb R^2} \partial_{x_1}^4 f_j  f_j^3 dx  + \mathrm{l.o.t.} 
\end{align*}
The desired conclusion follows easily.
\end{proof}

Consider $s>2$, and fix any $p\ne 2$. A general question  is whether one 
can find smooth frequency localized $f$ such that
\begin{align*}
\int_{\mathbb R^d} (-\Delta)^{\frac s2}  f |f|^{p-2} f dx <0. 
\end{align*}
All these have deep connections with the lack of positivity of the fundamental solution
for higher order heat propagators. In the next section we investigate  somewhat more
general situation concerning $\Lambda^s$, $s>2$. 

\section{Lack of positivity for $e^{-\Lambda^s} \delta_0$, $s>2$ }

\begin{lem}[\cite{Po}] \label{lem_s1}
Let $\alpha>0$. Define
\begin{align*}
F_{\alpha}(x)  = \int_0^{\infty} e^{-t^{\alpha}} \cos x t dt, \quad x>0.
\end{align*}
Then
\begin{align*}
\lim_{x\to \infty} x^{\alpha+1} F_{\alpha}(x) = \Gamma (\alpha+1) \sin \frac {\pi \alpha}2.
\end{align*}
\end{lem}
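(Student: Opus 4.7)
The plan is to rewrite $F_\alpha(x)$ as the real part of a complex contour integral, rotate the contour from the positive real axis toward the positive imaginary axis so that the oscillatory factor $e^{ixt}$ becomes the exponentially decaying $e^{-xu}$, and then extract the leading-order asymptotic by a Watson-type rescaling plus dominated convergence.

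Concretely, I would write $F_\alpha(x)=\mathrm{Re}\int_0^\infty e^{-t^\alpha+ixt}\,dt$ with the principal branch of $t^\alpha$. For $0<\alpha<1$, $e^{-t^\alpha}$ decays on every ray $\arg t = \theta$ with $|\theta|<\pi/(2\alpha)$, a sector containing the positive imaginary axis. Applying Cauchy's theorem on the quarter-disk contour $[0,R]\cup\{Re^{i\theta}:0\le\theta\le\pi/2\}\cup[iR,0]$ and letting $R\to\infty$ (the arc contribution is suppressed by $e^{-R^\alpha\cos(\alpha\theta)}$ near $\theta=0$ and by $e^{-xR\sin\theta}$ near $\theta=\pi/2$), one parametrizes the vertical segment via $t=iu$ and, after taking real parts, arrives at
\begin{align*}
F_\alpha(x)=\int_0^\infty e^{-xu}\,e^{-u^\alpha\cos(\pi\alpha/2)}\,\sin\!\bigl(u^\alpha\sin(\pi\alpha/2)\bigr)\,du.
\end{align*}

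The second step is the rescaling $u=v/x$. Multiplying through by $x^{\alpha+1}$ and using $|\sin y|\le|y|$ together with $\cos(\pi\alpha/2)\ge 0$ on $\alpha\in(0,1)$, the resulting integrand is dominated by $v^\alpha e^{-v}$ uniformly in $x$ and converges pointwise to $\sin(\pi\alpha/2)\,v^\alpha e^{-v}$; dominated convergence then yields
\begin{align*}
\lim_{x\to\infty}x^{\alpha+1}F_\alpha(x)=\sin(\pi\alpha/2)\int_0^\infty v^\alpha e^{-v}\,dv=\Gamma(\alpha+1)\sin(\pi\alpha/2),
\end{align*}
which is the claim in the range $0<\alpha<1$.

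The main obstacle is the range $\alpha\ge 1$, where the rotation can no longer be pushed to the imaginary axis because $e^{-t^\alpha}$ grows there. Two routes handle this. One may rotate only to some $\theta_0\in(0,\pi/(2\alpha))$, which still turns the phase into a decaying exponential $e^{-xu\sin\theta_0}$, and then recover the correct coefficient by combining the rescaled integral with a small-$u$ expansion of $e^{-u^\alpha e^{i\alpha\theta_0}}$; the constant $\sin(\pi\alpha/2)$ appears as the imaginary part of the first correction independently of $\theta_0$. Alternatively, and more cleanly, one computes the Mellin transform
\begin{align*}
M(F_\alpha)(s)=\tfrac{1}{\alpha}\,\Gamma(s)\cos(\pi s/2)\,\Gamma\!\Bigl(\tfrac{1-s}{\alpha}\Bigr),
\end{align*}
whose first pole to the right of the initial Mellin contour that is not killed by the zero of $\cos(\pi s/2)$ at $s=1$ lies at $s=1+\alpha$, with residue $-\Gamma(\alpha+1)\sin(\pi\alpha/2)$. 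Shifting the Mellin-inversion contour past this pole then reproduces the asymptotic uniformly in $\alpha>0$.
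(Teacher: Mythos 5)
Your argument is correct in substance but organized quite differently from the paper's (Polya's) proof. The paper first integrates by parts to replace $\cos$ by $\sin$, then substitutes $u=(xt)^{\alpha}$ so that $x^{\alpha+1}F_{\alpha}(x)=\mathrm{Im}\int_0^{\infty}e^{iu^{1/\alpha}-x^{-\alpha}u}\,du$; the contour in the $u$-variable is then rotated in two stages, first to a ray of small angle $\theta_0$ (so that the limit $x\to\infty$ can be taken), and then to the ray of angle $\alpha\pi/2$, where the integral evaluates to $\Gamma(\alpha+1)\sin\frac{\pi\alpha}{2}$. Because the exponent there is $u^{1/\alpha}$ rather than $t^{\alpha}$, this single deformation works uniformly for all $\alpha>0$. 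You instead rotate in the original $t$-variable, which is admissible up to the imaginary axis only for $0<\alpha\le 1$; in that range your Watson-type rescaling $u=v/x$ plus dominated convergence is clean and complete (the domination by $v^{\alpha}e^{-v}$ uses $\cos(\pi\alpha/2)\ge 0$, so this genuinely stops at $\alpha=1$). For $\alpha\ge 1$ both of your patches do produce the right constant: the partial rotation to $\theta_0<\pi/(2\alpha)$ gives, after rescaling, a vanishing leading term and a first correction $-\Gamma(\alpha+1)\cos\frac{(\alpha+1)\pi}{2}=\Gamma(\alpha+1)\sin\frac{\pi\alpha}{2}$ independently of $\theta_0$, and the Mellin transform $\frac{1}{\alpha}\Gamma(s)\cos(\pi s/2)\Gamma\bigl(\frac{1-s}{\alpha}\bigr)$ indeed has residue $-\Gamma(\alpha+1)\sin\frac{\pi\alpha}{2}$ at $s=1+\alpha$, the pole at $s=1$ being killed by the zero of $\cos(\pi s/2)$ and the residue vanishing, consistently, when $\alpha$ is an even integer. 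What remains to make either patch rigorous is routine but not free: in the first, the expansion of $e^{-(v/x)^{\alpha}e^{i\alpha\theta_0}}$ needs a splitting at $v\le \epsilon x$, since for $\alpha>1$ the naive remainder bound is not integrable against $e^{-v\sin\theta_0}$ on all of $(0,\infty)$; in the second, one must justify Mellin inversion and the contour shift past $s=1+\alpha$, using the exponential decay of $\Gamma\bigl(\frac{1-s}{\alpha}\bigr)$ on vertical lines. Compared with the paper, your route costs a case split and these justifications, but the Mellin version buys a full asymptotic expansion uniformly in $\alpha$, whereas the paper's two-stage deformation is shorter and covers all $\alpha>0$ in one stroke.
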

\begin{figure}[!h]
\includegraphics[width=0.4\textwidth]{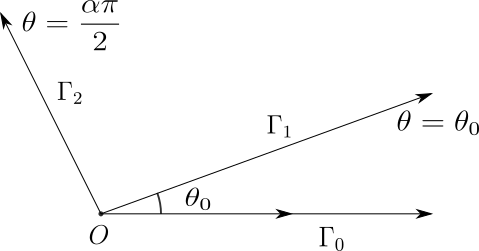}
\caption{Contours $\Gamma_1$ and $\Gamma_2$ }\label{fig:1}
\end{figure}

\begin{proof}
We briefly recall the argument of Polya as follows. First by using partial integration one has
\begin{align*}
x^{\alpha+1} F_{\alpha}(x) & =  x^{\alpha} 
\int_0^{\infty} (\sin x t) e^{-t^{\alpha}} d (t^{\alpha} ) \\
& = \int_0^{\infty} \sin  u^{\frac 1{\alpha}  }  e^{-x^{-\alpha} u} du \qquad( 
\text{ $u=t^{\alpha} x^{\alpha}  $ } )\\
& = \mathrm{Im} \Bigl( \int_0^{\infty} e^{i u^{\frac 1{\alpha} } - x^{-\alpha} u } du \Bigr) . 
\end{align*}
Now first deform the contour $\Gamma_0=[0, \infty)$ (see Figure \ref{fig:1}) to $\Gamma_{1}= \{ u = r e^{i\theta_0}:\; 0\le r <\infty \}$ 
for $0<\theta_0 \ll 1$, one has\footnote{This step is necessary 
since the integrand contains $e^{-x^{-\alpha} u}
=e^{-x^{-\alpha} (r \cos \theta+ i r \sin \theta) }$ for $u=r e^{i\theta}$, and 
$r \cos \theta$ may become negative 
if $\theta$ goes from $0$ to $\frac {\alpha \pi} 2$ especially when $\alpha>1$. }
\begin{align*}
\lim_{x\to \infty} x^{\alpha+1} F_{\alpha}(x) =
\mathrm{Im} \Bigl( \int_{\Gamma_1} e^{i u^{\frac 1 {\alpha} } } du \Bigr).
\end{align*}
One can then deform the latter integral from $\Gamma_1$ to
$\Gamma_2= \{ u=r e^{i \frac {\alpha \pi} 2 }:\, 0\le r <\infty\}$ to obtain
\begin{align*}
\mathrm{Im} \Bigl( \int_{\Gamma_1} e^{i u^{\frac 1 {\alpha} } } du \Bigr)
=\sin(\frac {\pi \alpha} 2)  \int_0^{\infty} e^{-r^{\frac 1 {\alpha} } } dr
= \Gamma(\alpha+1) \sin \frac {\pi \alpha} 2.
\end{align*}
\end{proof}
\begin{rem}
We shall need to use the standard Bessel functions: for $\nu>0$ and 
$z = \rho e^{i\theta}$ with $-\pi <\theta \le \pi$, 
\begin{align*}
J_{\nu}(z) = C_{\nu} z^{\nu} \int_{-1}^1 (1-s^2)^{\nu -\frac 12} e^{is z} ds,
\end{align*}
where $C_{\nu}>0$ depends only on $\nu$.  In particular we recall the usual formula
for Bessel functions (cf. pp. 11 of \cite{Er}): 
\begin{align} \label{e2.32}
\frac d{dz}( J_{\nu+1}(z) z^{\nu +1}) =
J_{\nu}(z) z^{\nu+1}.
\end{align}
We also recall (cf. pp. 168 of \cite{Wat} and pp. 149 of \cite{Leb}): for $\mathrm{Re}(\nu+\frac 12)>0$, $-\frac 12 \pi <\mathrm{arg}(z)<
\frac 32 \pi$, 
\begin{align} \label{e2.33}
H_{\nu}^{(1)} (z) 
=\left( \frac 2 {\pi z } \right)^{\frac 12} 
\frac 1 {\Gamma(\nu+\frac 12)}
e^{ i (z -\frac 12 \nu \pi -\frac 14 \pi )} 
\int_0^{\infty} e^{- u} u^{\nu -\frac 12}
(1+\frac {i u } {2 z} )^{\nu -\frac 12} du.
\end{align}
\end{rem}
\begin{lem}[\cite{BG}] \label{lem_s2}
Consider $d\ge 2$. Let $\alpha>0$ and define 
\begin{align}
F_{\alpha}(x) = \int_{\mathbb R^d} e^{- |\xi|^{\alpha} } e^{i x \cdot \xi} d\xi, \qquad x \in \mathbb R^d.
\end{align}
Then 
\begin{align}
\lim_{|x| \to \infty} |x|^{d+\alpha} F_{\alpha}(x) = C_{d,\alpha} \sin \frac {\alpha \pi}2,
\end{align}
where $C_{d,\alpha}>0$ depends only on ($d$, $\alpha$). 
\end{lem}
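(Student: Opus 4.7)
The plan is to reduce to a one-dimensional Hankel transform via radial symmetry, perform one integration by parts to expose the $|x|^{-d-\alpha}$ decay, and then extract the $\sin(\alpha\pi/2)$ factor by a contour-rotation argument in the spirit of Lemma \ref{lem_s1}.

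Set $\rho = |x|$ and $\nu = (d-2)/2$. Since $e^{-|\xi|^\alpha}$ is radial, the Fourier transform on $\mathbb{R}^d$ reduces to
\begin{align*}
F_\alpha(x) = c_d\, \rho^{-\nu}\int_0^\infty e^{-r^\alpha}\, r^{\nu+1}\, J_\nu(r\rho)\, dr
\end{align*}
for some positive $c_d$ depending only on $d$, and after the substitution $u = r\rho$,
\begin{align*}
\rho^d F_\alpha(x) = c_d \int_0^\infty e^{-(u/\rho)^\alpha}\, u^{\nu+1}\, J_\nu(u)\, du.
\end{align*}
Applying \eqref{e2.32} in the form $u^{\nu+1} J_\nu(u) = \frac{d}{du}\bigl(u^{\nu+1}J_{\nu+1}(u)\bigr)$ and integrating by parts (the boundary terms vanish: at zero $u^{\nu+1}J_{\nu+1}(u)\sim u^{d}$, at infinity the Gaussian factor dominates), I obtain
\begin{align*}
\rho^{d+\alpha} F_\alpha(x) = c_d\, \alpha \int_0^\infty e^{-(u/\rho)^\alpha}\, u^{\nu+\alpha}\, J_{\nu+1}(u)\, du.
\end{align*}

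To pass to the limit $\rho\to\infty$, note that the integrand converges pointwise to $u^{\nu+\alpha}J_{\nu+1}(u)$, which is only conditionally integrable, so dominated convergence fails on the real axis. Instead, I use $J_{\nu+1}(u) = \mathrm{Re}\,H_{\nu+1}^{(1)}(u)$ for $u > 0$ together with the integral representation \eqref{e2.33}, which shows $H_{\nu+1}^{(1)}$ is analytic in the sector $-\pi/2<\arg u<3\pi/2$ and carries an $e^{iu}$ factor. Near zero, $H_{\nu+1}^{(1)}(u)=O(u^{-\nu-1})$ combines with the $u^{\nu+\alpha}$ weight to give an integrable $O(u^{\alpha-1})$ singularity. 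I then rotate the contour from $[0,\infty)$ onto the ray $\Gamma_{\theta_0}=\{re^{i\theta_0}:r\ge 0\}$ with $0<\theta_0<\min(\pi/2,\pi/(2\alpha))$, exactly as in Lemma \ref{lem_s1}: on $\Gamma_{\theta_0}$ both $e^{-(u/\rho)^\alpha}$ (whose modulus is controlled by $\cos(\alpha\theta_0)>0$) and $e^{iu}$ (controlled by $\sin\theta_0>0$) provide a $\rho$-uniform integrable majorant, so dominated convergence applies and removes the Gaussian factor in the limit. The resulting integral $\int_{\Gamma_{\theta_0}} u^{\nu+\alpha}H_{\nu+1}^{(1)}(u)\,du$ is contour-independent within the sector, and a further rotation toward the ray of angle $\alpha\pi/2$ — accompanied, when $\alpha>2$, by the symmetric treatment of $H_{\nu+1}^{(2)}$ in the lower half-plane — produces the phase $e^{i(\nu+\alpha+1)\alpha\pi/2}$. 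Combined with \eqref{e2.33} and the Weber--Schafheitlin identity $\int_0^\infty x^{\nu+\alpha}J_{\nu+1}(x)\,dx = 2^{\nu+\alpha}\Gamma(\nu+1+\alpha/2)/\Gamma(1-\alpha/2)$, and then Euler's reflection $1/\Gamma(1-\alpha/2) = \Gamma(\alpha/2)\sin(\alpha\pi/2)/\pi$, this yields
\begin{align*}
\lim_{\rho\to\infty}\rho^{d+\alpha}F_\alpha(x) = C_{d,\alpha}\sin\tfrac{\alpha\pi}{2}, \qquad C_{d,\alpha} = c_d\,\alpha\,\frac{2^{\nu+\alpha}\Gamma(\tfrac{d+\alpha}{2})\Gamma(\tfrac{\alpha}{2})}{\pi} > 0.
\end{align*}

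The main technical obstacle is justifying the contour rotation uniformly in $\rho$ when $\alpha>2$, since the natural target ray $\{re^{i\alpha\pi/2}\}$ exits the domain of analyticity $-\pi/2<\arg u<3\pi/2$ of a single Hankel function. This forces the decomposition $J_{\nu+1}=(H_{\nu+1}^{(1)}+H_{\nu+1}^{(2)})/2$ with independent rotations of each piece into its own half-plane, together with careful bookkeeping of phase factors; the $\sin(\alpha\pi/2)$ then arises precisely from the mismatch between these two phases (equivalently, from Euler's reflection formula in the explicit Weber--Schafheitlin constant).
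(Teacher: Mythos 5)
Your first two steps coincide with the paper's argument (the hyperspherical reduction is exactly the Hankel-transform formula you write, the integration by parts is \eqref{e2.32}, and the passage to $H^{(1)}_{\nu+1}$ with \eqref{e2.33}, rotation onto a small-angle ray $\Gamma_{\theta_0}$ with $\alpha\theta_0<\pi/2$, and dominated convergence in $\rho$ are all as in the paper). The genuine gap is in your final evaluation of $\mathrm{Re}\int_{\Gamma_{\theta_0}}u^{\nu+\alpha}H^{(1)}_{\nu+1}(u)\,du$. The ``further rotation toward the ray of angle $\alpha\pi/2$'' is a mis-transplant of the mechanism of Lemma \ref{lem_s1}: there the rotation angle $\alpha\pi/2$ was dictated by the oscillation $e^{iu^{1/\alpha}}$, whereas here, after the stretched-exponential factor has been removed by dominated convergence, the only oscillation/decay left is the $e^{iu}$ inside $H^{(1)}_{\nu+1}$, which decays like $e^{-r\sin\theta}$ only for $0<\theta<\pi$. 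For $\alpha\ge 2$ the ray of angle $\alpha\pi/2$ leaves this half-plane (and for $\alpha\ge 3$ it even leaves the sector $-\pi/2<\arg z<3\pi/2$ of validity of \eqref{e2.33}), so the rotation you describe cannot be justified, and your proposed fix via $H^{(2)}_{\nu+1}$ in the lower half-plane is not available either, since by that point the integrand contains only $H^{(1)}$. The claimed phase $e^{i(\nu+\alpha+1)\alpha\pi/2}$ is likewise not where $\sin(\alpha\pi/2)$ comes from. Moreover the Weber--Schafheitlin identity you invoke is valid as a convergent integral only for $\nu+\alpha<1/2$; for the relevant range it is an analytic continuation, and identifying your Abel-type limit with that continued value is precisely the point that needs proof -- so as written the argument is circular there.

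The repair is short and is what the paper does: rotate instead from $\Gamma_{\theta_0}$ to the positive imaginary axis $\{z=i\rho\}$, which stays inside both the half-plane of decay of $e^{iz}$ and the sector of \eqref{e2.33}. On that ray the factor $(1+\tfrac{is}{2z})^{\nu+\frac12}$ becomes the positive quantity $(1+\tfrac{s}{2\rho})^{\nu+\frac12}$, the integral is absolutely convergent, and collecting the phases from $z^{\nu+\alpha}$, $z^{-1/2}$, $dz$, and the prefactor $e^{-i(\frac{(\nu+1)\pi}{2}+\frac{\pi}{4})}$ gives the total factor $e^{i(\frac{\alpha\pi}{2}-\frac{\pi}{2})}$; taking real parts yields $\sin\frac{\alpha\pi}{2}$ multiplied by a manifestly positive constant, with no $\alpha$-dependent contour and no case distinction in $\alpha$. (Your quoted closed-form constant is in fact the correct one, but it should be obtained this way, or by a separately justified Abelian/analytic-continuation argument, rather than from the rotation you describe.)
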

\begin{figure}[!h]
\includegraphics[width=0.4\textwidth]{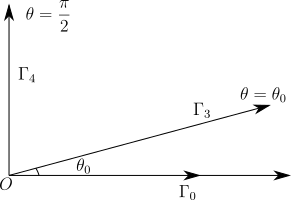}
\caption{Contours $\Gamma_3$ and $\Gamma_4$ }\label{fig:3}
\end{figure}

\begin{proof}
To simplify the notation we shall denote by $C$ a positive constant depending only
on ($d$, $\alpha$) which may vary from line to line. Denote $r=|x|$ and $t = |\xi|$. By passing
to hyper-spherical coordinates, we have
\begin{align*}
r^{d+\alpha} F_{\alpha}(x) &= C r^{d+\alpha} \int_0^{\infty} e^{-t^{\alpha}} t^{d-1}  \int_0^1
(1-s^2)^{\frac {d-3} 2}  \cos (rts) \; ds dt \\
&= C r^{\alpha} \int_0^{\infty} e^{- r^{-\alpha} t^{\alpha} }  t^{d-1}
\int_0^1 (1-s^2)^{\frac {d-3}2} \cos (ts) ds dt  \qquad (\text{$rt \to t$}).
\end{align*}
It is not difficult to check that (see \eqref{e2.32}, or one can verify directly the computation)
\begin{align*}
\frac d {dt} \Bigl( t^{d}
\int_0^1 (1-s^2)^{\frac {d-1}2} \cos (ts) ds \Bigr)
= C t^{d-1}
\int_0^1 (1-s^2)^{\frac {d-3}2} \cos (ts) ds.
\end{align*}
Thus
\begin{align*}
r^{d+\alpha} F_{\alpha}(x) &= C
\int_0^{\infty} e^{- r^{-\alpha} t^{\alpha} } 
t^{d+\alpha-1} \int_0^1 (1-s^2)^{\frac {d-1}2 }
\cos (ts) ds dt \\
& = C  \int_0^{\infty} e^{- r^{-\alpha} t^{\alpha}}
t^{\frac d2 +\alpha-1} J_{\frac d2} (t) dt =
C \mathrm{Re} \left(  \int_0^{\infty} e^{- r^{-\alpha} t^{\alpha}}
t^{\frac d2 +\alpha-1} H^{(1)}_{\frac d2} (t) dt \right).
\end{align*}
By \eqref{e2.33}, it suffices for us to examine (below $0<\theta_0\ll 1$ is a fixed angle)
\begin{align*}
&\lim_{\epsilon\to 0 } \mathrm{Re} \left( \int_0^{\infty} e^{- \epsilon t^{\alpha} } 
t^{\frac d2+ \alpha-1} 
\Bigl( t^{-\frac 12} e^{i t - i\frac {d+1} 4 \pi}
\int_0^{\infty} e^{-s} s^{\frac {n-1}2}
( 1+ \frac {is} {2t } )^{\frac {d-1} 2} ds \Bigr)
d t \right)  \\
=& \mathrm{Re} 
\left( \int_{\Gamma_3} z^{\frac {d} 2 +\alpha-\frac 32}e^{iz- i \frac {d+1}4 \pi} 
\Bigl( \int_0^{\infty} e^{-s} s^{\frac {d-1}2} ( 1+ \frac {is}{2z})^{\frac {d-1}2 } ds \Bigr) dz
\right)   \qquad ( \text{ $ \Gamma_3: \{z=r e^{i\theta_0} :\; 0\le r <\infty \}$})\\
= & \mathrm{Re} 
\left( \int_{\Gamma_4} z^{\frac {d} 2 +\alpha-\frac 32}e^{iz- i \frac {d+1}4 \pi} 
\Bigl( \int_0^{\infty} e^{-s} s^{\frac {d-1}2} ( 1+ \frac {is}{2z})^{\frac {d-1}2 } ds \Bigr) dz
\right)  \quad ( \text{ $\Gamma_4: \{ z=\rho i: \; 0\le \rho <\infty\}$})\\
=& (\sin \frac {\alpha \pi }2) \int_0^{\infty}
\rho^{\frac d2 +\alpha-\frac 32} e^{-\rho}
\Bigl( \int_0^{\infty} e^{-s} s^{\frac {d-1}2}
( 1+ \frac s {2\rho} )^{\frac {d-1} 2} d s  \Bigr) d \rho.
\end{align*}
The desired result clearly follows.

\end{proof}

\begin{thm}[Lack of positivity for the propagator $e^{- \Lambda^s}$ when $s>2$]
\label{thm2.1}
Define  for $s>0$, 
\begin{align*}
K_s(x) = \frac 1 {2\pi} \int_{\mathbb R} e^{-|\xi|^s} e^{i \xi \cdot x} d\xi.
\end{align*}
If $s>2$, then 
\begin{align*}
\min_{x\in \mathbb R} K_s(x) <0.
\end{align*}
More generally define 
\begin{align*}
K_{s,d}(x) = \frac 1 {(2\pi)^d} \int_{\mathbb R^d} e^{-|\xi|^s} e^{ i \xi \cdot x } d\xi.
\end{align*}
If $s>2$, then 
\begin{align*}
\min_{x\in \mathbb R^d} K_{s,d}(x) <0.
\end{align*}
\end{thm}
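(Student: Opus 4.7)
The plan is to give two complementary proofs, one based on the sharp spatial decay from Lemma~\ref{lem_s2}, and a second moment-vanishing argument on the Fourier side that covers all $s>2$.

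\emph{First proof (asymptotic).} I would apply Lemma~\ref{lem_s2} with $\alpha = s$, which yields
\begin{align*}
|x|^{d+s}K_{s,d}(x) \;\longrightarrow\; C_{d,s}\,\sin\!\Bigl(\frac{\pi s}{2}\Bigr), \qquad |x|\to\infty,
\end{align*}
with $C_{d,s}>0$; the one-dimensional case is handled identically by Lemma~\ref{lem_s1}. For $s\in(2,4)$ the angle $\pi s/2$ lies in $(\pi,2\pi)$, so $\sin(\pi s/2) < 0$; hence $K_{s,d}(x) < 0$ for all sufficiently large $|x|$, which gives $\min_x K_{s,d}(x) < 0$ and as a bonus pins down the sharp decay rate.

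\emph{Second proof (moment method).} The previous argument is insufficient when $s\ge 4$, since $\sin(\pi s/2)$ may be zero or positive. To cover all $s>2$, I would argue by contradiction. Suppose $K_{s,d}\ge 0$ on $\R^d$. Since $K_{s,d}$ is continuous (it is the inverse Fourier transform of the integrable function $e^{-|\xi|^s}$) and $\int K_{s,d} = \hat K_{s,d}(0) = 1$, it is a probability density. By evenness of $e^{-|\xi|^s}$, $K_{s,d}$ is even, and
\begin{align*}
1 - \hat K_{s,d}(\xi) \;=\; \int_{\R^d}\bigl(1-\cos(\xi\cdot x)\bigr)\,K_{s,d}(x)\,dx,
\end{align*}
an integral of a nonnegative quantity. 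Because $s > 2$, the Taylor expansion $1 - e^{-|\xi|^s} = |\xi|^s + O(|\xi|^{2s}) = o(|\xi|^2)$ shows the left-hand side above vanishes strictly faster than $|\xi|^2$ as $\xi\to 0$. Taking $\xi = \epsilon e_i$, dividing by $\epsilon^2$, and applying Fatou's lemma as $\epsilon\to 0^+$ yields
\begin{align*}
0 \;=\; \lim_{\epsilon \to 0^+}\frac{1-\hat K_{s,d}(\epsilon e_i)}{\epsilon^2} \;\ge\; \int_{\R^d}\frac{x_i^2}{2}\,K_{s,d}(x)\,dx.
\end{align*}
Hence $\int x_i^2\,K_{s,d}(x)\,dx = 0$ for each $i$, so $\int |x|^2\,K_{s,d}(x)\,dx = 0$, forcing $K_{s,d}$ to vanish off the origin. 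Continuity then gives $K_{s,d}\equiv 0$, contradicting $\int K_{s,d} = 1$.

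\emph{Main obstacle.} The asymptotic proof is clean but covers only $s\in(2,4)$, since the sign of $\sin(\pi s/2)$ cycles as $s$ grows. The workhorse is therefore the moment-vanishing argument, whose conceptual core is that $s > 2$ forces the Fourier symbol $1 - e^{-|\xi|^s}$ to vanish faster than $|\xi|^2$ at the origin; positivity of $K_{s,d}$ then makes every second moment vanish, which is incompatible with $K_{s,d}$ being a nontrivial continuous density. The only technical care needed is the Fatou step converting a Fourier-side estimate into a spatial-side moment identity.
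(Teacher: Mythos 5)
Your proposal is correct, and its two parts line up with, but do not coincide with, the paper's two proofs. Your asymptotic argument for $s\in(2,4)$ via Lemma~\ref{lem_s1}/Lemma~\ref{lem_s2} is exactly the first half of the paper's first proof; however, the paper then extends to $s\ge 4$ by a subordination trick (writing $e^{-|\xi|^{s_0\beta_0}}=\int_0^\infty e^{-\lambda|\xi|^{s_0}}\,d\mu_{\beta_0}(\lambda)$ with $s_0\beta_0\in(2,4)$ and transferring positivity), which you bypass entirely. Your workhorse moment argument is the same underlying phenomenon as the paper's second proof, but implemented differently: the paper assumes $K\ge 0$, forms $\tilde F(\xi)=-\Delta_\xi(e^{-|\xi|^s})=\int |x|^2K(x)e^{-ix\cdot\xi}\,dx$, and invokes Bochner-type positive definiteness ($|\tilde F(\xi)|\le|\tilde F(0)|$ with $\tilde F(0)=0$ since $s>2$), whereas you extract the vanishing of the second moments directly from $1-e^{-|\xi|^s}=o(|\xi|^2)$ via Fatou. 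Your route is more elementary (no appeal to Bochner's theorem) and in fact handles more carefully a point the paper only asserts, namely that $|x|^2K\in L^1$: your Fatou step delivers finiteness and vanishing of $\int x_i^2K\,dx$ simultaneously. One small step you gloss over: the claim $\int K_{s,d}=\widehat{K_{s,d}}(0)=1$ presupposes $K_{s,d}\in L^1$, which is not automatic from continuity alone; under the contradiction hypothesis $K_{s,d}\ge 0$ it follows by testing against Gaussians $e^{-\eps|x|^2}$ and using monotone convergence together with Fourier inversion (or, alternatively, from the decay bound implicit in Lemma~\ref{lem_s2}). With that sentence added, the proof is complete, and as a bonus it covers all $s>2$ in one stroke, with the asymptotic part only needed if one wants the sharp sign and decay rate at infinity.
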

\begin{proof}
We first consider the 1D case.
By Lemma \ref{lem_s1}, it is not difficult to check that $K_s(x) <0$ for $2<s<4$. 
We claim that for any $s\ge 4$, we must have $\inf K_s (x) <0$. Assume this is not true and
for some $s_0\ge 4$, it holds that $ K_{s_0}(\cdot)$ is always nonnegative. 
By using the usual subordination principle, for any $\beta \in (0,1)$, $t>0$, it holds
that 
\begin{align*}
e^{-t^{\beta} } = \int_0^{\infty} e^{-\lambda t} d \mu_{\beta} (\lambda),
\end{align*}
where $d\mu_{\beta}$ is a positive measure.  Taking $\beta_0 \in (0,1)$ sufficiently small,
we have
\begin{align*}
e^{-|\xi|^{s_0\beta_0} }= \int_0^{\infty} e^{-\lambda |\xi|^{s_0} } d \mu_{\beta_0} (\lambda),
\end{align*}
where $s_0 \beta_0 \in (2,4)$.  But then it follows that $K_{s_0\beta_0}$ must be nonnegative.
This is clearly a contradiction. This finishes the proof for the 1D case.
The higher dimensional case is similar by using Lemma \ref{lem_s2}.
\end{proof}

We now give yet another proof of Theorem \ref{thm2.1} based on a contradiction argument.
We first recall the usual Bochner theorem: namely if $F(\xi) = \mathbb E e^{-i \xi \cdot x}$ 
($\mathbb E(\cdot)$ denotes taking expectation with respect to some probability measure 
on $\mathbb R^n$), then $F(\cdot)$ must be a positive definite function. In particular we must
have
\begin{align}
|F(\xi)| \le |F(0)|, \qquad \forall\, \xi \in \mathbb R^d.
\end{align}
With this we now give an alternative proof of Theorem \ref{thm2.1}.
\begin{proof}[\underline{$2^{\mathrm{nd}}$ proof of Theorem \ref{thm2.1}}]
We argue by contradiction. Assume that 
\begin{align}
e^{-|\xi|^s}= \int_{\mathbb R^d} f(x) e^{- i x \cdot \xi} dx,
\end{align}
where $f$ is nonnegative for all $x \in \mathbb R^d$. We shall deduce a contradiction.

By using Fourier transform it is not difficult to check that $|\cdot|^2 f(\cdot) \in L_x^1(\mathbb R^d)$.  In particular we have
\begin{align}
\tilde F(\xi) = -\Delta_{\xi} (e^{-|\xi|^s}  ) = \int_{\mathbb R^d}
f(x) |x|^2 e^{-i x \cdot \xi} dx
\end{align}
and $\tilde F(\cdot)$ is continuous and positive definite.  Thus we must have
\begin{align}
|\tilde F(\xi) | \le |\tilde F(0)|, \qquad \forall\, \xi \in \mathbb R^d.
\end{align}
However since $s>2$, it is easy to check that $\tilde F(0) =0$ which clearly gives a contradiction!
\end{proof}

We now draw some consequences of the previous theorem.

\begin{thm} \label{thm2.2}
Let the dimension $d\ge 1$.  Let $s>2$.  There exists $p_0>2$ depending only on
($s$, $d$) such that for any $p \in [p_0, \, \infty)$, we can find $f \in C_c^{\infty}(\mathbb R^d, 
\mathbb R)$ such that
\begin{align} \label{e2.51}
\int_{\mathbb R^d} (\Lambda^s f ) |f|^{p-2} f dx \le  -C_{p,d,s}<0,
\end{align}
where $C_{p,d,s}>0$ depends only on ($p$, $d$, $s$).

Furthermore for the same $p\in [p_0, \, \infty)$, there exists a sequence of Schwartz functions
$f_j:\; \mathbb R^d \to \mathbb R$ with frequency localized around $N_j \to \infty$, such that 
\begin{align*}
-D_2< \frac{\int_{\mathbb R^d } \Lambda^s  f_j |f_j|^{p-2} f_j dx } { N_j^s \int_{\mathbb R^d} 
|f_j|^p dx } <-D_1<0.
\end{align*}
In the above $D_1>0$, $D_2>0$ are constants depending  on  ($p$, $d$, $s$). More precisely, the frequency support of $f_j$
satisfies
\begin{align*}
\mathrm{supp}(\widehat{f_j} ) \subset \{ \xi:\,  \alpha_1 N_j < |\xi| < \alpha_2 N_j \},
\end{align*}
where $\alpha_1>0$, $\alpha_2>0$ are constants depending on ($p$, $d$, $s$).
\end{thm}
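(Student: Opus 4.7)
\emph{Proof proposal.}
The plan is to deduce Theorem \ref{thm2.2} from the failure of positivity of $K_{s,d}$ provided by Theorem \ref{thm2.1}, by going through three steps: (i) translate non-positivity of $K_{s,d}$ into non-contractivity of the semigroup $e^{-t\Lambda^s}$ on $L^p$ for $p$ large; (ii) differentiate along the semigroup to produce a Schwartz function $g_p$ with negative Bernstein-type integral; (iii) truncate $\widehat{g_p}$ to a bounded annulus and rescale to obtain the desired frequency-localized sequence.

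For (i)--(ii): by Theorem \ref{thm2.1}, $K_{s,d}$ takes negative values, and since $\int K_{s,d}\, dx = \widehat{K_{s,d}}(0) = 1$, we get $\kappa := \|K_{s,d}\|_{L^1} > 1$. Fix any $y_\ast$ and set $\tilde f(y) := -\operatorname{sgn}(K_{s,d}(y_\ast - y))$; then $(e^{-\Lambda^s}\tilde f)(y_\ast) = -\kappa$, so $\|e^{-\Lambda^s}\tilde f\|_\infty \ge \kappa \|\tilde f\|_\infty$. Truncating $\tilde f$ to a sufficiently large ball and mollifying produces $\tilde f \in C_c^\infty(\mathbb R^d)$ with $\|e^{-\Lambda^s}\tilde f\|_\infty > (1+\delta)\|\tilde f\|_\infty$ for some $\delta = \delta(s,d) > 0$. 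Since $\tilde f$ and $e^{-\Lambda^s}\tilde f$ lie in $\mathcal S \cap L^\infty$, the standard limit $\|h\|_p \to \|h\|_\infty$ as $p \to \infty$ yields a threshold $p_0 = p_0(s,d) > 2$ such that $\|e^{-\Lambda^s}\tilde f\|_p > \|\tilde f\|_p$ for all $p \ge p_0$. Fix such $p$; then $\varphi(t) := \|e^{-t\Lambda^s}\tilde f\|_p^p$ satisfies $\varphi(1) > \varphi(0)$, so by the mean-value theorem there exists $\tau_p \in (0,1)$ with $\varphi'(\tau_p) > 0$. Setting $g_p := e^{-\tau_p\Lambda^s}\tilde f \in \mathcal S$ and using $\partial_t e^{-t\Lambda^s} = -\Lambda^s e^{-t\Lambda^s}$,
\[
\varphi'(\tau_p) \;=\; -p \int_{\mathbb R^d} \Lambda^s g_p\, |g_p|^{p-2}g_p\, dx \;>\; 0,
\]
so the Bernstein-type integral is strictly negative. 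Density of $C_c^\infty$ in $W^{s,p}$ preserves this strict sign, yielding \eqref{e2.51}.

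For (iii): let $\chi_{a,b}(\xi)$ be a smooth cutoff to the annulus $a \le |\xi| \le b$ with $\chi_{a,b} \uparrow 1$ pointwise on $\mathbb R^d\setminus\{0\}$ as $a \downarrow 0$ and $b \uparrow \infty$, and set $\tilde g_{a,b} := \mathcal F^{-1}(\chi_{a,b}\widehat{g_p})$. Since $g_p \in \mathcal S$, $\tilde g_{a,b} \to g_p$ in $W^{s,p}(\mathbb R^d)$ along this limit, and $h \mapsto \int \Lambda^s h\,|h|^{p-2}h\, dx$ is continuous on $W^{s,p}$; hence for specific radii $(\alpha_1, \alpha_2) = (\alpha_1(p,d,s), \alpha_2(p,d,s))$ the truncated function $\tilde g := \tilde g_{\alpha_1,\alpha_2}$ still satisfies $\int \Lambda^s \tilde g\,|\tilde g|^{p-2}\tilde g\, dx < 0$. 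Defining $f_j(x) := \tilde g(N_j x)$, a direct scaling check gives that $\widehat{f_j}$ is supported in $\alpha_1 N_j \le |\xi| \le \alpha_2 N_j$ and
\[
\frac{\int \Lambda^s f_j\,|f_j|^{p-2}f_j\, dx}{N_j^s \int |f_j|^p\, dx} \;=\; \frac{\int \Lambda^s \tilde g\,|\tilde g|^{p-2}\tilde g\, dy}{\int |\tilde g|^p\, dy},
\]
which is a fixed negative number, furnishing the bounds $-D_2 < \,\cdot\, < -D_1$.

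The main obstacle is step (iii): ensuring the sign of the Bernstein-type integral survives a Fourier truncation to a \emph{genuinely bounded} annulus $\alpha_1 \le |\xi| \le \alpha_2$. Qualitatively this follows from continuity of the functional on $W^{s,p}$, but the radii $\alpha_1, \alpha_2$ are not a priori controlled (in particular they need not be dyadically consecutive) and depend on the Fourier profile of $g_p$, hence on $(p,d,s)$. A more explicit alternative would be to construct $g_p$ directly as a wave packet centered near a negative minimum of $K_{s,d}$, exploiting Theorem \ref{thm2.1} in a pointwise form, but the resulting computations are noticeably heavier.
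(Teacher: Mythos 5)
Your proposal is correct and follows essentially the same route as the paper's proof: deduce $\|K_{s,d}\|_{L^1}>1$ from Theorem \ref{thm2.1}, beat the $L^\infty$ norm with a mollified, truncated sign of the kernel, pass to $L^p$ for $p\ge p_0$ via $\|\cdot\|_p\to\|\cdot\|_\infty$, differentiate $t\mapsto \|e^{-t\Lambda^s}\psi\|_p^p$ to extract a function with strictly negative Bernstein integral, and then obtain the frequency-localized sequence by an annular Fourier cutoff plus rescaling exactly as in Theorem \ref{t1} (your worry about the radii $\alpha_1,\alpha_2$ is harmless, since the theorem only asks for constants depending on $(p,d,s)$). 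The only small inaccuracy is the claim that $e^{-\Lambda^s}\tilde f$ and $g_p$ are Schwartz: for non-even $s$ the kernel decays only like $|x|^{-d-s}$, so these functions are merely in $C^\infty\cap L^1\cap L^\infty$, which is all the argument needs, and genuine Schwartz regularity is recovered after the annular truncation.
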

\begin{proof}
It suffices for us to prove \eqref{e2.51}. The frequency localized version follows from similar
arguments as in Theorem \ref{t1}. Fix $s>2$ and denote $K=e^{-\Lambda^s} \delta_0$ as
the kernel function corresponding to $e^{-\Lambda^s}$. By Theorem \ref{thm2.1}, we clearly
have $\| K \|_{L_x^1(\mathbb R^d)} >1$. By using the spatial decay of $K$, we have for
some $L_0$ sufficiently large
\begin{align}
\int_{\mathbb R^d} K(y) \mathrm{sgn}(K(y) ) \chi_{|y|\le L_0} dy >1.
\end{align}
By suitably mollifying the function $\mathrm{sgn}(K(y) ) \chi_{|y| \le L_0}$, we obtain
for some $\psi \in C_c^{\infty}(\mathbb R^d)$ with $\| \psi \|_{\infty}\le 1$ that
\begin{align}
\int_{\mathbb R^d} K(y) \psi (y) dy >1.
\end{align}
Thus for some $\beta_1>0$, 
\begin{align}
\| e^{-\Lambda^s} \psi \|_{\infty} \ge  (1+2\beta_1) \| \psi \|_{\infty}.
\end{align}
Since $\lim_{p \to \infty} \| \psi \|_p = \| \psi \|_{\infty}$ and
$\lim_{p\to \infty} \| e^{-\Lambda^s} \psi \|_p = \| e^{-\Lambda^s } \psi \|_{\infty}$, we
can find $p_0$ sufficiently large such that for all $p \in [p_0, \infty)$, 
\begin{align}
\| e^{-\Lambda^s} \psi \|_{p} \ge  (1+\beta_1) \| \psi \|_{p}.
\end{align}
Define $\psi_1 = \psi /\| \psi \|_p$. Clearly $\| \psi_1 \|_p=1$ and
\begin{align}
\int_0^1 \frac d {dt} \Bigl(  \| e^{-t \Lambda^s} \psi_1 \|_p^p \Bigr) dt
= \| e^{-\Lambda^s} \psi_1 \|_p^p  -1
\ge (1+\beta_1)^p -1 =:\tilde c_1>0.
\end{align}
Thus for some $t_0 \in (0,1)$, we must have
\begin{align}
\frac d {dt} \Bigl( \| e^{-t \Lambda^s } \psi_1 \|_p^p  \Bigr) \biggr|_{t=t_0} \ge \frac 12
\tilde c_1>0.
\end{align}
Denote $\psi_2 = e^{-t_0 \Lambda^s } \psi_1$. For some constant $\tilde c_2>0$,
we clearly have
\begin{align}
\int_{\mathbb R^d} \Lambda^s \psi_2 | \psi_2 |^{p-2} \psi_2 dx \ge \tilde c_2>0.
\end{align}
It follows that for some $\psi_3 \in C_c^{\infty}(\mathbb R^d)$ and some $\tilde c_3>0$,
\begin{align}
\int_{\mathbb R^d} \Lambda^s \psi_3 | \psi_3 |^{p-2} \psi_3 dx \ge \tilde c_3>0.
\end{align}
Thus \eqref{e2.51} is proved.
\end{proof}

\begin{thm} \label{thm3.3}
Let the dimension $d\ge 1$.  Let $s>2$.  There exists $1<p_1<2$ depending only on
($s$, $d$) such that for any $p \in (1, p_1]$, we can find $f \in C_c^{\infty}(\mathbb R^d, 
\mathbb R)$ such that
\begin{align} \label{e2.62}
\int_{\mathbb R^d} (\Lambda^s f ) |f|^{p-2} f dx \le  -C_{p,d,s}<0,
\end{align}
where $C_{p,d,s}>0$ depends only on ($p$, $d$, $s$).

Furthermore for the same $p\in (1, p_1]$, there exists a sequence of Schwartz functions
$f_j:\; \mathbb R^d \to \mathbb R$ with frequency localized around $N_j \to \infty$, such that 
\begin{align}
-D_2< \frac{\int_{\mathbb R^d } \Lambda^s  f_j |f_j|^{p-2} f_j dx } { N_j^s \int_{\mathbb R^d} 
|f_j|^p dx } <-D_1<0.
\end{align}
In the above $D_1>0$, $D_2>0$ are constants depending  on  ($p$, $d$, $s$). More precisely, the frequency support of $f_j$
satisfies
\begin{align}
\mathrm{supp}(\widehat{f_j} ) \subset \{ \xi:\,  \alpha_1 N_j < |\xi| < \alpha_2 N_j \},
\end{align}
where $\alpha_1>0$, $\alpha_2>0$ are constants depending on ($p$, $d$, $s$).
\end{thm}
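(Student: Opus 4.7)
The plan is to mirror the strategy of Theorem \ref{thm2.2} but at the opposite endpoint: instead of exploiting the limit $p\to\infty$ and the $L^\infty$ lack of positivity, the idea is to set up the key inequality at $L^1$ and propagate it to $L^p$ for $p$ slightly greater than $1$. Write $K=K_{s,d}$ for the convolution kernel of $e^{-\Lambda^s}$. Since $\widehat{K}(0)=\int K=1$ while $K$ is strictly negative somewhere by Theorem \ref{thm2.1}, one has $\|K\|_1>1$. First, I would fix a nonnegative mollifier $\varphi\in C_c^\infty(\mathbb R^d)$ with $\|\varphi\|_1=1$ supported in a ball of small radius; the standard approximate-identity convergence $K*\varphi\to K$ in $L^1$ then delivers
\begin{equation*}
\|e^{-\Lambda^s}\varphi\|_1 \ge (1+2\beta_1)\|\varphi\|_1
\end{equation*}
for some $\beta_1>0$ depending only on $(s,d)$, provided the support of $\varphi$ is sufficiently small.

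Next I would transfer this inequality to $L^p$ for $p$ just above $1$. Both $\varphi$ and $e^{-\Lambda^s}\varphi=K*\varphi$ lie in $L^1\cap L^\infty$, and by Lemma \ref{lem_s2} the latter inherits the pointwise decay $|K*\varphi(x)|\lesssim\langle x\rangle^{-d-s}$. This furnishes integrable majorants of $|\varphi|^p$ and $|e^{-\Lambda^s}\varphi|^p$ uniform for $p$ in a neighborhood of $1$, so dominated convergence yields $\|\varphi\|_p\to\|\varphi\|_1$ and $\|e^{-\Lambda^s}\varphi\|_p\to\|e^{-\Lambda^s}\varphi\|_1$ as $p\to 1^+$. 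Consequently there is $p_1>1$, depending only on $(s,d)$, such that
\begin{equation*}
\|e^{-\Lambda^s}\varphi\|_p \ge (1+\beta_1)\|\varphi\|_p \qquad \text{for all } p\in(1,p_1].
\end{equation*}

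From here the argument is identical to the closing steps of Theorem \ref{thm2.2}: normalize $\varphi_1=\varphi/\|\varphi\|_p$, use the identity $\frac{d}{dt}\|e^{-t\Lambda^s}\varphi_1\|_p^p=-p\int(\Lambda^s u)|u|^{p-2}u\,dx$ (with $u=e^{-t\Lambda^s}\varphi_1$), integrate over $t\in[0,1]$, and invoke the mean value theorem to extract $t_0\in(0,1)$ where this derivative is strictly positive; equivalently $\int(\Lambda^s u)|u|^{p-2}u\,dx<0$ at $t=t_0$, and an $L^p$-density argument replaces $u$ by a $C_c^\infty$ function $f$ satisfying \eqref{e2.62}. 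For the frequency-localized assertion, I would invoke the cutoff-and-rescale device of the proof of Theorem \ref{t1}: Fourier-truncate $f$ to a fixed annulus $\{\alpha_1<|\xi|<\alpha_2\}$, which perturbs the integral by an amount as small as desired (hence preserves its sign), and then dilate by $N_j\to\infty$ to produce the required sequence $f_j$.

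The main obstacle I anticipate is the $L^1\to L^p$ transfer: I need a \emph{single} test function $\varphi$ that delivers $\|e^{-\Lambda^s}\varphi\|_p\ge(1+\beta_1)\|\varphi\|_p$ uniformly on the whole interval $(1,p_1]$, not merely in the limit $p\to 1^+$. The resolution is to fix $\varphi$ \emph{before} taking any limit, so that the required statement reduces to continuity of $\|\cdot\|_p$ at $p=1$ for the two specific functions $\varphi$ and $e^{-\Lambda^s}\varphi$; this is a dominated-convergence assertion rather than a uniform estimate over a class, and the polynomial decay of $K$ from Lemma \ref{lem_s2} provides the needed $L^1$ majorant for $|e^{-\Lambda^s}\varphi|^p$ for all $p$ in a right-neighborhood of $1$.
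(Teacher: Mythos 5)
Your proposal is correct, but it proves the key $L^p$-expansion estimate by a genuinely different route than the paper. The paper argues by duality: it takes the $p_0>2$ and the function $f$ with $\|e^{-\Lambda^s}f\|_{p'}\ge 1+2\gamma_1$ already produced in the proof of Theorem \ref{thm2.2}, sets $p_1=p_0/(p_0-1)$, and uses self-adjointness of $T=e^{-\Lambda^s}$ together with $\|Tf\|_{p'}=\sup_{\|\psi\|_p=1}\langle\psi,Tf\rangle=\sup\langle T\psi,f\rangle$ to extract a $\psi\in C_c^\infty$ with $\|\psi\|_p=1$ and $\|T\psi\|_p\ge 1+\gamma_1$; the conclusion then follows from the same time-integration identity you use. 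You instead work directly at the $L^1$ endpoint: since $\widehat K(0)=1$ and $K$ changes sign (Theorem \ref{thm2.1}), $\|K\|_1>1$, and an approximate identity $\varphi$ realizes this as $\|e^{-\Lambda^s}\varphi\|_1\ge(1+2\beta_1)\|\varphi\|_1$ because the $L^1$ operator norm of convolution with $K$ is exactly $\|K\|_1$; you then pass to $p\in(1,p_1]$ by continuity of $p\mapsto\|g\|_p$ at $p=1$ for the two fixed functions $\varphi$ and $K*\varphi$ (dominated convergence, for which $L^1\cap L^\infty$ already suffices — the decay bound from Lemma \ref{lem_s2} is more than you need). The trade-off: the paper's duality argument is shorter once Theorem \ref{thm2.2} is in hand and ties $p_1$ to the conjugate of $p_0$, whereas your argument is independent of the construction in Theorem \ref{thm2.2} (it only uses $\|K\|_1>1$), and it identifies the natural near-extremizers at the $L^1$ endpoint. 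Your closing steps — normalization, $\int_0^1\frac{d}{dt}\|e^{-t\Lambda^s}\varphi_1\|_p^p\,dt\ge(1+\beta_1)^p-1$, selection of $t_0$ with a quantitative lower bound on the derivative, approximation by a $C_c^\infty$ function, and the Fourier truncation plus dilation for the frequency-localized statement — coincide with the paper's (and are carried out at the same level of rigor as the paper, which likewise treats the approximation and truncation limits as routine).
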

\begin{proof}
We only need to show \eqref{e2.62}.  The idea is to use the construction in Theorem
\ref{thm2.2} and duality. Denote $T=e^{-\Lambda^s}$. 
Let $p_0>2$ be the same as in Theorem \ref{thm2.2}. Denote $p_1 = p_0/(p_0-1)$.
 For $p\in (1, p_1]$, denote $p^{\prime} = p/(p-1) \in [p_0, \infty)$. 
By using the proof of Theorem \ref{thm2.2}, we can find $f \in C_c^{\infty}(\mathbb R^d)$
with $\|f \|_{p^{\prime} } =1$ such that for some constant $\gamma_1>0$, 
\begin{align}
 \| T f \|_{p^{\prime} } \ge 1+2\gamma_1.
\end{align}
Since 
\begin{align}
\| Tf \|_{p^{\prime}} = \sup_{\|\psi \|_p =1} \langle \psi, T f \rangle, 
\end{align}
we can find $\psi \in C_c^{\infty} (\mathbb R^d)$ with $\| \psi \|_p =1$ such that
\begin{align}
1+\gamma_1 \le \langle \psi, T f \rangle = \langle T \psi, f \rangle \le \|T \psi \|_p 
\|f \|_{p^{\prime} } = \| T \psi \|_p.
\end{align}
We can then use the inequality
\begin{align}
\int_0^1 \frac d {dt} \Bigl( 
\| e^{-t\Lambda^s} \psi \|_p^p \Bigr) dt =\| e^{-\Lambda^s} \psi \|_p^p -1
\ge (1+\gamma_1)^p -1
\end{align}
to obtain \eqref{e2.62}.
\end{proof}

\begin{rem}
The previous theorems show the failure also for $s>2$ of the Strook-Varopoulos inequality.
\end{rem}

\begin{rem}
In \cite{Lieb89}, Lieb considered maximizers for the problem:
\begin{align}
\sup_f \frac {\| \mathcal G f \|_q} { \|f \|_p},
\end{align}
where $\mathcal G$ is an integral operator with Gaussian kernel $G$, and $1<p, q<\infty$.  
For degenerate and centered Gaussian kernel $G$ (see equation (1.3) in \cite{Lieb89}) the supremum can be shown to be taken over centered Gaussian functions. In particular
if we consider the problem\footnote{Note that the kernel corresponding to
$e^{\Delta}$ is $K(x,y)=(4\pi)^{-d} e^{-\frac{|x-y|^2} 4}$ which is degenerate in the
language of \cite{Lieb89}.}
\begin{align}
\sup_{f} \frac {\| e^{\Delta} f \|_p} { \|f \|_p},
\end{align}
for $p\in (1,\infty)$, then it is clear that one may take $f_n = e^{t_n\Delta} \delta_0$, with
$t_n\to \infty$ as $n\to \infty$ in order to saturate the optimal operator norm bound $1$. 
On the other hand, for general signed kernel $G$ an intriguing problem is to classify the
maximizers or the maximizing sequence. These type of results will improve our understanding
of the Bernstein-type inequalities. 
\end{rem}

\section{Bernstein inequality for the periodic case}
In this section we show some positive results for the fractional Laplacian operator
$\Lambda^s$, $0<s \le 2$ on the periodic torus. 
Let $\mathbb T^d= \mathbb R^d/\mathbb Z^d=[-\frac 12, \frac 12]^d$. 

For any integrable $f: \mathbb T^d \to \mathbb C$, denote
\begin{align*}
\boxed{
 \langle f\rangle  = \int_{\mathbb T^d} f(x) dx.}
\end{align*}
We use the following convention for Fourier transform on $\mathbb T^d$: 
\begin{align}
& \widehat f (k) = \int_{\mathbb T^d} f(x) e^{-2\pi i k \cdot x} dx, \qquad k \in \mathbb Z^d; \\
& f(x) = \sum_{k \in \mathbb Z^d} \widehat f (k) e^{2\pi i k \cdot x}, \qquad x \in \mathbb T^d.
\end{align}
The fractional laplacian operator $\Lambda^s =(-\Delta)^{s/2}$, $s>0$ on $ \mathbb T^d$ is defined as
\begin{align}
\widehat{\Lambda^s f } (k) = |k|^s \widehat f (k), \qquad k \in \mathbb Z^d.
\end{align}
In yet other words it corresponds to the Fourier multiplier $|k|^s$.  Note that $\widehat f (0) = \langle f \rangle$.

\begin{thm}[Bernstein inequality on the torus] \label{thm3.1}
Let $0<s\le 2$ and consider $\Lambda^s$ on $\mathbb T^d= [-\frac 12, \frac 12]^d$, $d\ge 1$. 
Let $1<p<\infty$. 
For any smooth $f:\mathbb T^d \to \mathbb R$ with $\langle f \rangle=0$, we have
\begin{align} \label{e4.34}
\| e^{  - t \Lambda^s } f \|_p \le e^{-c_{p,s,d} t } \| f \|_p,  \qquad \forall\, t>0.
\end{align}
Here $c_{p,s,d}>0$ depends only on ($p$, $s$, $d$).  Consequently for any
smooth $f$ with $\langle f \rangle =0$ we have
\begin{align} \label{4.39a}
\int_{\mathbb T^d} (\Lambda^s f ) |f|^{p-2} {f} dx \ge \tilde c_{p,s,d} \| f \|_p^p,
\end{align}
where $\tilde c_{p,s,d}>0$ depends only on ($p$, $s$, $d$). 
\end{thm}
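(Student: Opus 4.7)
The plan is to establish the Bernstein-type inequality \eqref{4.39a} first, then deduce the semigroup contraction \eqref{e4.34} from it via a Gronwall argument. Indeed, if $f$ is smooth and mean-zero, then $u(t) := e^{-t\Lambda^s} f$ is again mean-zero (the zeroth Fourier mode is preserved by the semigroup), and differentiating $\|u(t)\|_p^p$ in $t$ gives
\begin{align*}
\frac{d}{dt}\|u\|_p^p = -p \int_{\mathbb T^d} \Lambda^s u \cdot |u|^{p-2} u \, dx \le -p\, \tilde c_{p,s,d}\, \|u\|_p^p
\end{align*}
by \eqref{4.39a} applied at each time. Gronwall's lemma then yields \eqref{e4.34} with $c_{p,s,d} = \tilde c_{p,s,d}$. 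The base case $p = 2$ of \eqref{4.39a} is immediate from Plancherel: since $\widehat f(0)=0$ and $|k|\ge 1$ for every $k \in \mathbb Z^d\setminus\{0\}$,
\begin{align*}
\int_{\mathbb T^d} \Lambda^s f \cdot f \, dx = \sum_{k\ne 0} |k|^s |\widehat f(k)|^2 \ge \|f\|_2^2.
\end{align*}

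For general $p \in (1,\infty)$, my starting point would be a Stroock-Varopoulos type pointwise inequality, valid for all $0 < s \le 2$ via either the subordination representation of $\Lambda^s$ or the Caffarelli-Silvestre extension when $s < 2$:
\begin{align*}
\int_{\mathbb T^d} \Lambda^s f \cdot |f|^{p-2} f\, dx \ge C_{s,p} \|\Lambda^{s/2} g\|_2^2, \qquad g := |f|^{p/2-1}f,
\end{align*}
for which $\|g\|_2^2 = \|f\|_p^p$. Combining with the torus spectral gap $|k| \ge 1$ for $k \ne 0$ then gives
\begin{align*}
\|\Lambda^{s/2} g\|_2^2 \ge \|g - \langle g\rangle\|_2^2 = \|g\|_2^2 - \langle g\rangle^2 = \|f\|_p^p - \langle g\rangle^2,
\end{align*}
so \eqref{4.39a} reduces to showing that $\langle g\rangle^2 \le (1-\delta)\|g\|_2^2$ for some $\delta = \delta(p,d) > 0$, uniformly over mean-zero $f$.

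This last step is a concentration inequality and is the main obstacle: it captures a rigidity phenomenon, namely that the nonlinear substitute $g = |f|^{p/2-1}f$ can be nearly constant only if $f$ itself is nearly constant, which combined with $\langle f\rangle = 0$ would force $f \approx 0$ and hence $g \approx 0$, contradicting any normalization $\|f\|_p = 1$. Turning this qualitative rigidity into a quantitative gap $\delta(p,d) > 0$ cannot be done by soft compactness, since the unit ball of $L^p(\mathbb T^d)$ fails to be compact. This is where complex interpolation enters: I would set up an analytic family of operators connecting the nonlinear estimate at exponent $p$ to the linear estimate at $p=2$, where the gap is sharp by Plancherel, and interpolate to transfer the $p=2$ gap to the desired exponent with an explicit loss. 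Finally, the range $1 < p < 2$ follows by duality, since $\Lambda^s$ is self-adjoint and the conjugate exponent lies in $(2,\infty)$. I anticipate the most delicate aspect to be controlling the dependence of $\delta(p,d)$ as $p \to 1^+$ or $p \to \infty$, where the map $f \mapsto |f|^{p/2-1}f$ degenerates.
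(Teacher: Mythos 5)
Your overall architecture is reasonable and, for the case $s=2$, is in fact close to the ``simplified approach'' the paper sketches after its main argument (Lemma \ref{lem3.3} plus a dichotomy): prove the coercivity \eqref{4.39a} by combining a Stroock--Varopoulos type inequality with the torus spectral gap $|k|\ge 1$, then get \eqref{e4.34} by Gronwall. That last deduction is fine, and the reduction of \eqref{4.39a} to the uniform gap $\langle g\rangle^2\le(1-\delta)\,\|g\|_2^2$, $g=|f|^{p/2-1}f$, is exactly the right crux: it is precisely the paper's concentration inequality \eqref{40} specialized to $y=0$. The genuine gap is that you never prove it, and the mechanism you propose for it does not apply. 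The statement is a scalar inequality about a single fixed mean-zero function; there is no analytic family of operators to interpolate, no two endpoint operator bounds to feed into a three-lines/Phragm\'en--Lindel\"of argument, and ``connecting the nonlinear estimate at $p$ to the linear estimate at $p=2$'' has no concrete meaning here. In the paper, complex interpolation serves a different purpose entirely (transferring the $L^2$ decay and the $L^\infty$ boundedness of $e^{t\Delta}$ to $L^p$ via a Stein/Riesz--Thorin family with the strong Phragm\'en--Lindel\"of lemma), while the concentration inequality itself is proved by elementary quantitative estimates: a dichotomy on $\mathbb E|f|$, the interpolation $\|f\|_{p/2}\le\|f\|_p^{(p-2)/(p-1)}\|f\|_1^{1/(p-1)}$, and the key use of $\mathbb E f=0$ through $|\mathbb E\,\mathrm{sgn}(f)|=|\mathbb E\,\mathrm{sgn}(f)(|f|-1)|\le\mathbb E\,||f|-1|$, together with the cutoff $\chi_{|f|\ge\eta}$ and the choice $\eta=\delta_1^{1/4}$. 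Without either this quantitative argument or some substitute, your proof of \eqref{4.39a} is incomplete at its central step.

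Two further remarks. First, your claim that the gap ``cannot be done by soft compactness'' is not right: if mean-zero $f_n$ with $\|f_n\|_p=1$ had $\langle g_n\rangle^2\to 1$, then $\|g_n-\langle g_n\rangle\|_2^2=1-\langle g_n\rangle^2\to 0$, so $g_n\to\pm1$ in $L^2$; continuity of the inverse power map $g\mapsto|g|^{2/p-1}g$ from $L^2$ to $L^p$ then forces $f_n\to\pm1$ in $L^p$, contradicting $\langle f_n\rangle=0$. No compactness of the unit ball is needed because the limit is an explicit constant, and this already yields a (non-explicit) $\delta(p,d)>0$, which is all Theorem \ref{thm3.1} requires; alternatively quantify it as in the paper. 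Second, two smaller points: the Stroock--Varopoulos inequality for $\Lambda^s$, $0<s\le2$, on the torus should be justified (subordination plus the symmetric Markov semigroup version, with the $\min\{1,p-1\}$-type constant as in Lemma \ref{lem_Kato} when $s=2$); and the duality step you invoke for $1<p<2$ is both unnecessary (Stroock--Varopoulos covers all $1<p<\infty$) and delicate as stated, since the dual of the mean-zero subspace of $L^p$ is $L^{p'}$ modulo constants, which costs a multiplicative factor that is harmless for large-time decay but ruins the differentiate-at-$t=0$ deduction of \eqref{4.39a}.
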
 
\begin{rem}
Similar results hold if $f$ is complex-valued or vector-valued. For example if $f:\mathbb T^d
\to \mathbb R^{d_1}$ and $\int_{\mathbb T^d} f dx =0$, then we have
\begin{align}
\| | e^{-t \Lambda^s f } | \|_p \le  e^{-c t } \| |f | \|_p,
\end{align}
where $|f| =\sqrt{f_1^2 + \cdots + f_{d_1}^2}$. For complex-valued $f$,  \eqref{4.39a}
should be replaced by
\begin{align}
\int_{\mathbb T^d} (\Lambda^s f ) |f|^{p-2} {f^*} dx \ge \tilde c_{p,s,d} \| f \|_p^p,
\end{align}
where $f^*$ denotes the complex conjugate of $f$. 
\end{rem}

\begin{rem}
We briefly explain the heuristics as follows. Consider the case $s=2$, i.e. the usual
Laplacian $\Delta = -\Lambda^2$.  Clearly we have
\begin{align*}
&\| e^{t \Delta} f \|_2 \le e^{-ct} \| f\|_2, \qquad \forall\, f \text{ with $\langle f \rangle=0$};\\
&\| e^{t\Delta} f \|_{\infty} \le \| f \|_{\infty}; \\
& \| e^{t\Delta} f \|_1 \le \| f \|_1.
\end{align*}
By \emph{formally} interpolating the above two inequalities,  it is natural to expect that
for any $p\in (1, \infty)$, 
\begin{align*}
\| e^{t \Delta} f \|_p \le e^{-c_p t} \| f \|_p, \qquad \forall\, f \text{ with $\langle f \rangle=0$},
\end{align*}
where $c_p>0$.  However due to the presence of the constraint $\langle f \rangle=0$, this requires some nontrivial interpolation of Riesz-Thorin type.  The technical difficulty is that the usual Riesz-Thorin interpolation employs
a nonlinear functor which in general does not preserve the condition $\langle f \rangle =0$. 
Nevertheless in
 Theorem \ref{thm3.1} we overcome this difficulty by proving some
nontrivial concentration-type inequalities. 
\end{rem}

\begin{lem}[Strong Phragman-Lindelof estimate]
Suppose $h$ is an analytic function on the strip $0<\mathrm{Re}(z)<1$ and is continuous
up to the boundary. Assume for some constant $\alpha<\pi$ and constant $A$, 
\begin{align}
|h(z) | \le e^{A e^{a |\mathrm{Re}(z) | } }, \qquad \forall\, \text{$z$ in the closed strip}.
\end{align}
Then for any $0<\theta<1$, we have
\begin{align} \label{e3.75}
|h(\theta)| \le \exp\left( \frac {\sin \pi \theta} 2 
\int_{-\infty}^{\infty} \Bigl(
\frac {\log |h(iy) |} {\cosh \pi y - \cos \pi \theta} 
+ \frac { \log |h(1+iy)|} { \cosh \pi y + \cos \pi \theta} \Bigr) dy  \right).
\end{align}
\end{lem}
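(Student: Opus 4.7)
The plan is to recognize \eqref{e3.75} as the Poisson integral representation inequality for the subharmonic function $u(z)=\log|h(z)|$ on the strip $\Omega=\{z:0<\mathrm{Re}(z)<1\}$, and to justify that representation in the unbounded setting via a Phragm\'en--Lindel\"of argument adapted to the given growth rate (which I interpret as double-exponential growth in $|\mathrm{Im}(z)|$ with rate $\alpha<\pi$, since growth in $|\mathrm{Re}(z)|$ is automatically uniformly bounded on the closed strip). The two kernels
\begin{align*}
P_0(\theta,y)=\frac{\sin\pi\theta}{2(\cosh\pi y-\cos\pi\theta)},\qquad P_1(\theta,y)=\frac{\sin\pi\theta}{2(\cosh\pi y+\cos\pi\theta)}
\end{align*}
appearing in \eqref{e3.75} are the harmonic measures of $\{\mathrm{Re}\,z=0\}$ and $\{\mathrm{Re}\,z=1\}$ as seen from $z=\theta$, and are obtained by pulling back the standard upper half-plane Poisson kernel under the conformal map $\zeta=e^{i\pi z}$, which carries $\Omega$ biholomorphically onto $\{\mathrm{Im}\,\zeta>0\}$ with the two components of $\partial\Omega$ sent to the positive and negative real axes.

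With the kernels identified I would define $v:\Omega\to\R$ by
\begin{align*}
v(\theta+i\eta)=\int P_0(\theta,\eta-t)\log|h(it)|\,dt+\int P_1(\theta,\eta-t)\log|h(1+it)|\,dt,
\end{align*}
and note that both integrals converge absolutely: $|\log|h(it)||\le A e^{\alpha|t|}$ while the kernels decay like $e^{-\pi|t|}$, and $\alpha<\pi$. Standard Poisson theory then shows $v$ is harmonic on $\Omega$ with nontangential boundary values equal to $\log|h|$ a.e.\ on $\partial\Omega$, so $w(z):=u(z)-v(z)$ is subharmonic on $\Omega$ with a.e.\ boundary values $\le 0$.

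The crux of the proof is the Phragm\'en--Lindel\"of step. Introduce the comparison function $\phi(z)=\sin(\pi\,\mathrm{Re}\,z)\cosh(\pi\,\mathrm{Im}\,z)$, which is harmonic and positive on $\Omega$, vanishes on $\partial\Omega$, and satisfies $\phi(z)\sim\tfrac{1}{2}\sin(\pi\,\mathrm{Re}\,z)\,e^{\pi|\mathrm{Im}\,z|}$. For each $\epsilon>0$ the function $w-\epsilon\phi$ is subharmonic with boundary values $\le 0$, and since $|w(z)|=O(e^{\alpha|\mathrm{Im}\,z|})+O(1)$ while $\epsilon\phi$ grows at the strictly larger rate $\pi>\alpha$, we have $w-\epsilon\phi\to-\infty$ as $|\mathrm{Im}\,z|\to\infty$ uniformly on compact subsets of $\mathrm{Re}\,z\in(0,1)$. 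The maximum principle on the strip then forces $w-\epsilon\phi\le 0$ everywhere on $\bar\Omega$, and letting $\epsilon\to 0^+$ yields $u\le v$, which at $z=\theta$ is exactly \eqref{e3.75}.

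The principal technical obstacle I anticipate is the careful handling of zeros of $h$, where $\log|h|=-\infty$: one must verify that $v$ remains finite as a genuine Lebesgue integral and that the a.e.\ boundary inequality $w\le 0$ is compatible with the maximum principle on the unbounded domain $\Omega$. A clean workaround is to apply the entire argument to the smooth subharmonic regularization $u_\delta(z)=\tfrac{1}{2}\log(|h(z)|^2+\delta^2)$ (for which the nonnegative part of the argument goes through verbatim with the same growth bound), obtain the representation with $u_\delta$ and the corresponding $v_\delta$, and then pass to the limit $\delta\to 0^+$ using monotone convergence on the right-hand side of \eqref{e3.75} together with the upper semicontinuity of $u=\log|h|$.
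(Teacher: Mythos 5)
Your overall architecture is sound, and it is in fact more than the paper offers (the paper's ``proof'' is only a citation to Chapter 5.4 of Stein--Weiss): you transport the half-plane Poisson kernel by $\zeta=e^{i\pi z}$ to identify the two kernels as harmonic measure of the strip, majorize $\log|h|$ by the Poisson integral $v$ of its boundary values, and justify the majorization on the unbounded strip by a Phragm\'en--Lindel\"of comparison; your regularization $u_\delta=\tfrac12\log(|h|^2+\delta^2)$ is also the right device for zeros of $h$. One small inaccuracy: the hypothesis only bounds $\log|h|$ from \emph{above} on the boundary, so your claim of absolute convergence via $|\log|h(it)||\le Ae^{\alpha|t|}$ is unjustified (the negative part may not be integrable); this is harmless because your $\delta$-regularization plus monotone convergence handles it, but it should be stated that way from the start. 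You also correctly read the hypothesis as double-exponential growth in $|\mathrm{Im}\,z|$ with rate $\alpha<\pi$.

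The genuine gap is in the Phragm\'en--Lindel\"of step. Your comparison function $\phi(z)=\sin(\pi\,\mathrm{Re}\,z)\cosh(\pi\,\mathrm{Im}\,z)$ vanishes on the two boundary lines, so on the horizontal segments $\{\mathrm{Im}\,z=\pm T\}$ the barrier $\epsilon\phi$ is large only for $\mathrm{Re}\,z$ bounded away from $0$ and $1$; near the corners it is of size $\epsilon\,\sin(\pi\,\mathrm{Re}\,z)\,e^{\pi T}$, which can be arbitrarily small, while all you know there is $w\le Ae^{\alpha T}+O(1)$. Hence $w-\epsilon\phi\le0$ cannot be verified on the top and bottom of the truncating rectangles, and ``the maximum principle on the strip'' cannot be invoked from decay that is uniform only on compact subsets of the width: for an unbounded domain you need either an a priori upper bound on the subharmonic function or control at infinity uniform up to the edges, and with the critical-rate barrier the rectangle truncation fails even when $w$ is bounded. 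The standard repair is to choose $\beta$ with $\alpha<\beta<\pi$ and use $\phi_\beta(z)=\mathrm{Re}\cos\bigl(\beta(z-\tfrac12)\bigr)=\cos\bigl(\beta(\mathrm{Re}\,z-\tfrac12)\bigr)\cosh(\beta\,\mathrm{Im}\,z)\ \ge\ \cos(\beta/2)\cosh(\beta\,\mathrm{Im}\,z)>0$ on the closed strip: then $w_\delta-\epsilon\phi_\beta\le0$ on the vertical boundary and tends to $-\infty$ uniformly in $\mathrm{Re}\,z$ as $|\mathrm{Im}\,z|\to\infty$, so the maximum principle on rectangles applies, and letting $T\to\infty$, $\epsilon\to0^+$, $\delta\to0^+$ yields \eqref{e3.75}. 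Note that this subcritical choice is precisely where the hypothesis $\alpha<\pi$ is genuinely used; with your barrier the strict inequality plays no role, which is a sign the step is not closing.
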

\begin{proof}
See for example Chapter 5.4 of \cite{Stein}.
\end{proof}

\begin{lem}[Small mean implies short-time decay] \label{lem_Small}
Let $s>0$ and consider the torus $\mathbb T^d$, $d\ge 1$. Suppose $f: \; \mathbb T^d
\to \mathbb C$ and  $f \in L^2$. If $\frac 1 {\|f\|_2} |\int_{\mathbb T^d} f dx | \le \lambda <1$, then
\begin{align}
\| e^{-t \Lambda^s } f \|_2 \le e^{-\alpha_1 t } \|f \|_2, \quad \forall\, 0<t<t_0.
\end{align}
Here $\alpha_1>0$, $t_0>0$  are constants depending only on ($s$, $d$, $\lambda$). 
\end{lem}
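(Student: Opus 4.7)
The plan is to argue entirely on the Fourier side using Parseval. Note that $\hat f(0)=\int_{\mathbb T^d} f\,dx$, so the hypothesis reads $|\hat f(0)|\le \lambda \|f\|_2$ with $\lambda<1$. By Plancherel,
\begin{align*}
\|e^{-t\Lambda^s} f\|_2^2
= \sum_{k\in\mathbb Z^d} |\hat f(k)|^2 e^{-2t|k|^s}
= |\hat f(0)|^2 + \sum_{k\neq 0} |\hat f(k)|^2 e^{-2t|k|^s}.
\end{align*}
The key elementary observation is that any nonzero $k\in\mathbb Z^d$ satisfies $|k|\ge 1$, so $|k|^s\ge 1$ for every $s>0$, and therefore $e^{-2t|k|^s}\le e^{-2t}$ for $t\ge 0$.

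Applying this to the nonzero modes and invoking the hypothesis yields
\begin{align*}
\|e^{-t\Lambda^s} f\|_2^2
\le |\hat f(0)|^2 + e^{-2t}\bigl(\|f\|_2^2 - |\hat f(0)|^2\bigr)
\le \|f\|_2^2 \bigl[ \lambda^2 + (1-\lambda^2) e^{-2t}\bigr].
\end{align*}
So the lemma reduces to choosing $\alpha_1>0$ and $t_0>0$ such that
\begin{align*}
\lambda^2 + (1-\lambda^2) e^{-2t} \le e^{-2\alpha_1 t}, \qquad 0<t<t_0.
\end{align*}
Both sides equal $1$ at $t=0$; differentiating, the LHS has slope $-2(1-\lambda^2)$ while the RHS has slope $-2\alpha_1$. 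Hence any choice $\alpha_1<1-\lambda^2$, e.g.\ $\alpha_1=(1-\lambda^2)/2$, forces the RHS to dominate for small positive $t$; continuity then gives an interval $(0,t_0)$ on which the inequality holds, where $t_0$ depends only on $\lambda$ (a fortiori only on $s,d,\lambda$).

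There is really no obstacle here: the lemma is a soft spectral estimate whose only content is that nonzero Fourier modes on $\mathbb T^d$ are bounded below in frequency by $1$, so a strict mean-vs-$L^2$ deficit gets converted into exponential $L^2$ decay on a short time interval. The lemma will later be bootstrapped (presumably iteratively, or via the Phragmén–Lindelöf / complex-interpolation machinery stated just before it) to produce the $L^p$ decay \eqref{e4.34} of Theorem \ref{thm3.1}; it is in that bootstrapping step, not here, that one expects the real work.
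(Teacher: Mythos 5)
Your proof is correct and follows essentially the same route as the paper: both split off the zero Fourier mode, use that nonzero modes on $\mathbb T^d$ satisfy $|k|\ge 1$ so the semigroup damps them by at least $e^{-2t}$, and then convert the bound $\lambda^2+(1-\lambda^2)e^{-2t}$ into $e^{-2\alpha_1 t}$ for small $t$ by an elementary comparison. Your explicit derivative-at-zero argument for the last step is just a more detailed version of the paper's ``$\le e^{-c_1 t}$ for $0<t\ll 1$''.
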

\begin{proof}
With no loss we assume $\|f \|_2 =1$. 
Denote $\langle f \rangle = \int_{\mathbb T^d} f dx$. 
Clearly 
\begin{align}
\| e^{-t\Lambda^s} f \|_2^2 & \le e^{-ct} \| f- \langle f \rangle \|_2^2 + | \langle f \rangle |^2 \\
& \le e^{-ct} ( \|f \|_2^2 - |\langle f \rangle|^2) + |\langle f \rangle|^2 \\
& \le e^{-ct} + (1-e^{-ct} ) \lambda^2 \le e^{-c_1t}, 
\qquad \text{for $0<t\ll 1$}. 
\end{align}

\end{proof}

\begin{proof}[Proof of Theorem \ref{thm3.1}]
We shall present the proof for the simplest case $s=2$, $d=1$ and $2<p<\infty$. 
It is not difficult to adapt the proof to the most general situations. 

It suffices for us to prove \eqref{e4.34} for $0<t\le t_0$ where $t_0>0$ can be taken as a small
constant depending on ($s$, $d$, $p$).  This is because for $t>t_0$,
\begin{align}
\| e^{-t \Lambda^s } f \|_p & = \| e^{-t_0 \Lambda^s} e^{-(t-t_0)\Lambda^s} f \|_p  \\
& \le e^{-c t_0} \| e^{-(t-t_0) \Lambda^s  }f\|_p, \qquad (\text{since $\langle e^{-(t-t_0)\Lambda^s} f \rangle =0$}).
\end{align}
One can then iterate the estimates to get the decay for all $t>0$. 

Let $\frac 1 {p} = \frac {1-\theta}2$, $0<\theta<1$. Take simple real-valued functions $f$, $g$ with $\langle f \rangle=0$,
$\| f \|_{p}=1$, $\| g \|_{p^{\prime} }=1$ (here $ p^{\prime}=p/(p-1)$).  Consider
\begin{align*}
h(s) = \langle e^{t\Delta} ( |f|^{p (\frac {1-s}2 + \frac{s}{\infty})} \operatorname{sgn}(f)),
|g|^{p^{\prime} (\frac{1-s}{2} + \frac{s}1) }
\operatorname{sgn}(g) \rangle,
\end{align*}
where $\langle f_1,f_2 \rangle :=\int_{\mathbb T} \bar f_1 f_2 dx$. Here $\bar f_1$ denotes
the complex conjugate of $f_1$. 

(Here we recall the usual Riesz-Thorin setup: namely in going from
$L^{p_0} \to  L^{q_0}$, $L^{p_1} \to L^{q_1}$ to $L^p \to L^{q}$, one needs to employ the 
general interpolation formula for simple functions $f$ and $g$:
\begin{align*}
& f_z = |f|^{p (\frac {1-z}{p_0} + \frac {z} {p_1} )} \mathrm{sgn}(f), \\
& g_z= |g|^{q (\frac{1-z}{q_0^{\prime} } + \frac z {q_1^{\prime} } )} \mathrm{sgn}(g),
\end{align*}
where $q_j^{\prime}$ are conjugates of $q_j$. Our case corresponds to $p_0=q_0=2$, $p_1=q_1=\infty$.)

We verify the interpolation as follows. 

\begin{itemize}
\item The case $\operatorname{Re}(s)=1$. Clearly
\begin{align}
|h(s)| \le \| e^{t\Delta} ( |f|^{p \frac {-i \mathrm{Im}(s) }2} \mathrm{sgn}(f) ) \|_{\infty}
\| |g|^{p^{\prime}}  \|_1 \le \| g\|_{p^{\prime}}^{p^{\prime}} =1.
\end{align}

\item  The case $\operatorname{Re}(s)=0$, i.e. $s=iy$, $y\in \mathbb R$. 
First for all $y \in \mathbb R$, we clearly have
\begin{align}
|h(iy) | & \le \| e^{t\Delta} ( |f|^{p \frac {1-iy}2} \mathrm{sgn}(f) ) \|_2
\| |g|^{p^{\prime}(\frac {1-iy}2+iy)} \mathrm{sgn}(y) \|_2 \\
& \le \| |f|^{\frac p 2} \|_2 \| |g|^{\frac 12 p^{\prime} } \|_2 \le 1.
\end{align}
It remains for us to show that  for $|y| \le  1$ and $0<t\le t_0$ (for some small $t_0>0$),
\begin{align}
|h(iy)|\le e^{-ct}, \label{30}
\end{align}
where $c>0$ is some constant. 
If this holds, we can just use Strong Phragman-Lindelof Theorem to conclude the interpolation argument. Indeed by using \eqref{e3.75}, we have
\begin{align}
|h(\theta)| \le \exp\left( \frac {\sin \pi \theta}2 \int_{|y|\le 1} \frac {-ct} {\cosh \pi y -
\cos\theta} dy \right) \le e^{-\tilde c t},
\end{align}
where $\tilde c>0$ is a constant. 

\item It remains for us  to verify \eqref{30}. By Lemma \ref{lem_Small}, it suffices for 
us to establish for $|y|\le 1$,
\begin{align}
| \mathbb E (  (|f|^p)^{\frac{1-iy}2} \operatorname{sgn}(f) ) | \le \lambda<1, \label{40}
\end{align}
where $\lambda>0$ is some constant.  Here and below we denote 
\begin{align}
\mathbb E h = \int_{\mathbb T} h dx.
\end{align}
We recall that $\|f\|_p=1$ and $\langle f \rangle =0 =\mathbb E f$. 

The proof of \eqref{40} follows from the following steps. 

\item If $\mathbb E|f| \le \lambda_1 <1$, then by using the interpolation
$\|f \|_{\frac p2} \le \|f \|_p^{\frac {p-2}{p-1}} \| f \|_1^{\frac 1 {p-1} }$, we have
\begin{align*}
| \mathbb E ( |f|^{\frac p2} ) | \le \lambda_1^{ \frac 1 {p-1} \cdot \frac p2} <1.
\end{align*}
This clearly implies \eqref{40}. Therefore we can assume $\lambda_1 < \mathbb E |f| \le 1$ and 
$\lambda_1 \to 1-$. Since $\|f \|_2 \le \|f \|_p \le 1$, we have 
\begin{align*}
\mathbb E | |f|-1|^2  \le 2- 2 \mathbb E |f| \le 2(1-\lambda_1)=:\delta_1 \ll 1.
\end{align*}
We shall view $\delta_1$ as a tunable parameter which can be taken sufficiently small.

\item By using the inequality $|x^{\frac p2} -1 | \lesssim |x-1| \langle x \rangle^{\frac {p-2}2}$, we have
\begin{align*}
\mathbb E ||f|^{\frac p2} -1| \le \delta_2 = O( \delta_1^{\frac 12} )\ll 1.
\end{align*}

\item  We now take $\eta>0$ whose smallness will be specified momentarily. Clearly
\begin{align}
| \mathbb E (  (|f|^p)^{\frac{1-iy}2} \operatorname{sgn}(f) ) |
& \le | \mathbb E (  (|f|^p)^{\frac{1-iy}2} \operatorname{sgn}(f) ) \chi_{|f|\ge \eta} |  +
\eta^{\frac p2} \\
& \le \mathbb E ||f|^{\frac p2} -1| +
\mathbb E | e^{-i\frac p2 y \log |f|} -1| \chi_{|f|\ge \eta} + |\mathbb E \mathrm{sgn}(f) | +
\eta^{\frac p2} \\
& \le \mathbb E ||f|^{\frac p2} -1|
+ \frac p2 \mathbb E |\log |f| | \chi_{|f|\ge \eta} + |\mathbb E \mathrm{sgn} (f) | +
\eta^{\frac p2}.
\end{align}

\item Observe that for $x\ge \eta$ ($\eta<1$ will be taken sufficiently small), we have
\begin{align}
|\log x|=|\log x - \log 1| \le \frac 1 {\eta} |x-1|.
\end{align}
Thus
\begin{align}
\mathbb E |\log |f| | \chi_{|f|\ge \eta} \le \frac 1 {\eta} \mathbb E ||f|-1|.
\end{align}
On the other hand,  observe (below we use the crucial property that $\mathbb E f =\mathbb E |f| \mathrm{sgn}(f) =0$) 
\begin{align*}
|\mathbb E \mathrm{sgn}(f) |= |\mathbb E  \operatorname{sgn} f (|f|-1) | \le \mathbb E ||f|-1|.
\end{align*}

\item Thus we obtain
\begin{align}
| \mathbb E (  (|f|^p)^{\frac{1-iy}2} \operatorname{sgn}(f) ) |
&\le  \mathbb E | |f|^{\frac p2} -1| +  ( \frac p{2\eta}+1) \mathbb E ||f|-1| +\eta^{\frac p2} \\
&\le  O(\delta_1^{\frac 12})  \cdot (1+ \frac p{2\eta} ) + \eta^{\frac p2}.
\end{align}
Taking $\eta=\delta_1^{\frac 14}$ with $\delta_1$ sufficiently small clearly yields the result.
\end{itemize}
\end{proof}

In what follows, we shall explain a somewhat more simplified approach to the proof of Theorem
\ref{thm3.1}. 

We begin with a simple yet powerful lemma.
\begin{lem} \label{lem3.3}
Let $\Omega=\mathbb R^d$ or the periodic torus $\mathbb T^d$.
Suppose $K\in L^1(\Omega)$ is nonnegative with unit $L^1$ mass. For any
$p \in [2, \infty)$, we have
\begin{align}
\| K * f \|_{L^p(\Omega)} \le \| K * ( |f|^{\frac p2} ) \|_{L^2(\Omega)}^{\frac 2p}.
\end{align}
Here $*$ denotes the usual convolution, i.e. 
\begin{align}
(K*f)(x) = \int K(x-y) f(y) dy.
\end{align}
For $p \in (1,2]$, we have
\begin{align}
\| K*f \|_p \le \| K*( |f|^{\frac p2} ) \|_2^2 \cdot \| f \|_p^{2-p}.
\end{align}
\end{lem}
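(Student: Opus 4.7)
The plan is to exploit the fact that $K$ is a probability density on $\Omega$: $K \ge 0$ with $\|K\|_{L^1} = 1$. Hence for each $x \in \Omega$ the measure $d\mu_x(y) := K(x-y)\,dy$ is a probability measure, and the convolution is a conditional expectation $(K*g)(x) = \int g\, d\mu_x$. The strategy is to apply Jensen-type inequalities pointwise in $x$ and then integrate, reducing the Lemma to pointwise convexity estimates.

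For $p \in [2, \infty)$ the function $t \mapsto t^{p/2}$ is convex on $[0,\infty)$, so Jensen's inequality applied to $d\mu_x$ gives pointwise in $x$
\[
|(K*f)(x)|^{p/2} \le ((K*|f|)(x))^{p/2} \le (K*(|f|^{p/2}))(x).
\]
Squaring both sides and integrating over $\Omega$ yields $\|K*f\|_p^p \le \|K*(|f|^{p/2})\|_2^2$, which is the first claim after extracting $p$-th roots.

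For $p \in (1, 2]$ the Jensen inequality moves in the opposite direction since $t \mapsto t^{p/2}$ is concave, and the simple approach collapses. My plan is to split $|f(y)| = |f(y)|^{p/2}\cdot|f(y)|^{1-p/2}$ inside the convolution and apply Cauchy--Schwarz with $K(x-y) = K(x-y)^{1/2}\cdot K(x-y)^{1/2}$ to obtain
\[
(K*|f|)(x) \le \Bigl( \int K(x-y) |f(y)|^{p}\, dy \Bigr)^{1/2} \Bigl( \int K(x-y) |f(y)|^{2-p}\, dy \Bigr)^{1/2}.
\]
From here the first factor is rewritten in terms of $K*(|f|^{p/2})$ by applying the $p \ge 2$ case (with exponent $2$) to the function $|f|^{p/2}$, while the second factor is absorbed using Young's inequality and the probability normalization $\|K\|_{L^1} = 1$, producing a factor of $\|f\|_p^{2-p}$ after integrating in $x$ and raising to appropriate powers.

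The main obstacle is the $p \in (1,2]$ case, where the pointwise Jensen inequality moves the wrong way and must be replaced by the delicate Cauchy--Schwarz splitting above. The non-trivial bookkeeping lies in arranging the decomposition so that $K*(|f|^{p/2})$ reappears on the right-hand side while the residual $|f|^{1-p/2}$ contribution combines into precisely $\|f\|_p^{2-p}$ after integration; balancing the exponents here so that the scaling matches is the essential technical step of the proof.
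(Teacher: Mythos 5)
Your $p\ge 2$ case is exactly the paper's argument (Jensen for the probability measure $d\mu_x(y)=K(x-y)\,dy$) and is fine. The gap is in the $p\in(1,2]$ case: the Cauchy--Schwarz splitting $|f|=|f|^{p/2}\cdot|f|^{1-p/2}$ squares both factors, so what appears is $K*(|f|^{p})$ and $K*(|f|^{2-p})$, and the quantity you actually need on the right-hand side, $K*(|f|^{p/2})$, never enters. The proposed repair --- ``rewrite $\bigl(K*(|f|^{p})\bigr)^{1/2}$ in terms of $K*(|f|^{p/2})$ by the $p\ge2$ case with exponent $2$'' --- cannot work: the exponent-$2$ instance of the first part is the trivial $\|K*g\|_2\le\|K*|g|\|_2$, and pointwise Jensen gives $\bigl(K*(|f|^{p/2})\bigr)^2\le K*(|f|^{p})$, i.e.\ the inequality runs in the wrong direction, so $K*(|f|^p)$ cannot be dominated by an expression in $K*(|f|^{p/2})$. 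If you push your estimate through (raise to the $p$-th power, H\"older in $x$ with exponents $2/p$ and $2/(2-p)$, then Young), everything collapses to the trivial bound $\|K*f\|_p\le\|f\|_p$, which carries none of the decay the lemma is designed to capture.

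The paper's route, which is what you should use, is an \emph{unequal} H\"older splitting with respect to $d\mu_x$: write $|f|=|f|^{p-1}\cdot|f|^{2-p}$ and apply H\"older with exponents $\frac{p}{2(p-1)}$ and $\frac{p}{2-p}$ (equivalently, log-convexity of $L^q(d\mu_x)$ norms, $\|g\|_{2/p}\le\|g\|_1^{p-1}\|g\|_2^{2-p}$ with $g=|f|^{p/2}$). This gives pointwise
\begin{align*}
(K*|f|)(x) \le \Bigl((K*(|f|^{\frac p2}))(x)\Bigr)^{\frac{2(p-1)}{p}}\Bigl((K*(|f|^{p}))(x)\Bigr)^{\frac{2-p}{p}},
\end{align*}
and after raising to the $p$-th power, integrating, applying H\"older with exponents $\frac1{p-1}$, $\frac1{2-p}$, and using $\|K*(|f|^p)\|_1\le\|f\|_p^p$, one gets $\|K*f\|_p^p\le\|K*(|f|^{p/2})\|_2^{2(p-1)}\,\|f\|_p^{p(2-p)}$. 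Note this is the homogeneity-consistent form: the display in the lemma with exponent $2$ on $\|K*(|f|^{p/2})\|_2$ fails under the scaling $f\mapsto\lambda f$ and should be read with exponent $\frac{2(p-1)}{p}$ (in the intended application one has $\|f\|_p=1$ and $\||f|^{p/2}\|_2=1$, so either form yields the needed decay); trying to balance exponents to hit the literal statement is not something any correct argument will do.
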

\begin{proof}
Observe that for each fixed $x$, $K(x-y)dy$ can be viewed as a probability measure. Thus
if $p\in [2, \infty)$,  then
\begin{align}
\int |f(y) | K(x-y) dy \le \left( \int |f(y)|^{\frac p2} K(x-y) dy \right)^{\frac 2p}.
\end{align}
This yields  the first inequality. Now for $p\in (1,2)$, 
by using the inequality  $\| g \|_{\frac 2p} \le \|g\|_1^{p-1} \| g \|_2^{2-p}$
with $g= |f|^{\frac 2p}$ and $d\mu = K(x-y) dy$,  
we have
\begin{align}
\int |f (y) | K(x-y) dy 
\le \left( \int |f(y)|^{\frac p2} K(x-y) dy \right)^{\frac {2(p-1)} p}
\left( \int |f(y)|^{p}  K(x-y) dy \right)^{\frac {2-p} p}.
\end{align}
Thus
\begin{align}
\| K* f \|_p \le \| K*( |f|^{\frac p2} ) \|_2^2 \cdot \| f \|_p^{2-p}.
\end{align}
\end{proof}

We now sketch a different proof of Theorem \ref{thm3.1} for the Laplacian case (i.e.
$-\Lambda^2=\Delta$) as follows. With no loss we consider the case
$\mathbb T^d=\mathbb T$ and $p \in (2,\infty)$.  Take $f$ with mean zero
and $\|f\|_p=1$. Discuss two cases. 
\begin{itemize}

\item 
\underline{Case 1}: $\|f\|_{\frac p2+1} \ll 1$.  Clearly then $\|f \|_{\frac p 2} \ll 1$.  By Lemma
\ref{lem3.3}, we obtain
 \begin{align}
 \| e^{t\Delta } f \|_p \le \| e^{t\Delta} (|f|^{\frac p2} ) \|_2^{\frac 2p}.
\end{align}
By Lemma \ref{lem_Small}, since $\mathbb E |f|^{\frac p2} \ll 1$ and $\| |f|^{\frac p2} \|_2 =1$,
we obtain
\begin{align}
\| e^{t\Delta} ( |f|^{\frac p2} ) \|_2 \le e^{-ct} \|  |f|^{\frac p2} \|_2 =e^{-ct},
\qquad 0<t\le t_0.
\end{align}
This clearly implies the desired estimate $\|e^{t\Delta } f \|_p \le e^{-\tilde c t} \|f \|_p$ for
$0<t\le t_0$. Note that this part of the argument can be adapted to $e^{-t\Lambda^s}$ for
$0<s<2$.

\item
\underline{Case 2}: $\|f\|_{\frac p2+1} \gtrsim 1$.  Note that
\begin{align*}
\langle |f|^{\frac p2} \operatorname{sgn}(f), f \rangle = \int |f|^{\frac p2+1} dx \gtrsim 1.
\end{align*}
Since $f$ is spectrally localized to $|k|\ge 1$,  it follows that (below $P_k$ is the Fourier
projection to all modes $|k|\ge 1$)
\begin{align*}
\| P_{|k|\ge 1} ( |f|^{\frac p2} \operatorname{sgn}(f) ) \|_2 \| f \|_2 \gtrsim 1.
\end{align*}
On the torus, we obviously have $\|f \|_2 \le \|f\|_p =1$.  Thus
\begin{align}
\| P_{|k|\ge 1} ( |f|^{\frac p2} \mathrm{sgn}(f) ) \|_2 \gtrsim 1.
\end{align}
In yet other words,  the $L^2$-mass of 
$|f|^{\frac p 2} \operatorname{sgn}(f)$ must have a nontrivial portion in $|k| \ge 1$.  Now
observe that
\begin{align}
\int (-\Delta f ) |f|^{p-2} f dx & = \mathrm{const} \int | \nabla f |^2 |f|^{p-2} dx \\
& = \mathrm{const} \int | \nabla ( |f|^{\frac p2} \mathrm{sgn} (f ) ) |^2 dx \\
&\gtrsim \| P_{|k|\ge 1} ( |f|^{\frac p2} \mathrm{sgn}(f) ) \|_2^2
\gtrsim 1 = \|f\|_p^p.
\end{align}
Thus the desired inequality follows.
\end{itemize}

Next we shall state and prove a frequency localized Bernstein inequality on the torus. 
Let $\psi \in C_c^{\infty}(\mathbb R^d)$ be such that $\psi(\xi)=1$ for $|\xi|\le 1$
and $\psi(\xi) =0$ for $|\xi|\ge 1.01$. For integer $N\ge 2$ and $f:\, \mathbb T^d \to \mathbb C$,
define 
\begin{align}
\widehat{P_N f}(k) = \widehat f (k) ( \psi(\frac {k}{2N} ) -  \psi ( \frac k N) ).
\end{align}
In yet other words, $P_N$ is a smooth frequency projection to $\{ |k| \sim N \}$.  Here on the
torus we use the convention
\begin{align}
&\widehat{f}(k) = \int_{\mathbb T^d} f(x) e^{-i 2\pi x \cdot k } dx; \\
& f(x) = \sum_{k \in \mathbb Z^d} \widehat f(k) e^{2\pi i k \cdot x}.
\end{align}

We need the following lemma from Kato \cite{Kato}. The inequality stated therein\footnote{
Note that there is a minor typo in the definition of $Q_p$ in formula (2.2), pp 55 of \cite{Kato}: the lower limit for the
 integration therein should be $\phi(x) \ne 0$ instead of $\partial \phi(x) \ne 0$.}
is for the whole space. We adapt it here for the torus with essentially the same proof.
\begin{lem}[Kato \cite{Kato}] \label{lem_Kato}
Let $1<p<\infty$. Assume $\phi \in W^{2,p}(\mathbb T^d \to \mathbb R^{d_1} )$, where
$d\ge 1$, $d_1\ge 1$. Then
\begin{align}
- \langle |\phi|^{p-2} \phi, \, \Delta \phi \rangle
\ge  \min\{1,  p-1\}  \int_{\phi \ne 0} |\nabla \phi|^2 |\phi|^{p-2} dx.
\end{align}
Here for $f$, $g:\mathbb T^d \to \mathbb R^{d_1}$, 
\begin{align}
\langle f,\, g \rangle = \sum_{j=1}^{d_1} \int_{\mathbb T^d} f_j g_j dx.
\end{align}
\end{lem}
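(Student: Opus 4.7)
The plan is to follow the classical Kato-type approach by regularization, working directly on the torus where periodicity eliminates boundary terms. First I would introduce, for $\epsilon>0$, the regularized modulus
\begin{align*}
u_\epsilon(x) = (|\phi(x)|^2 + \epsilon )^{1/2},
\end{align*}
which is smooth away from the zero set of $\phi$, and uniformly bounded below by $\sqrt{\epsilon}$. The key pointwise identity is $u_\epsilon \nabla u_\epsilon = \phi \cdot \nabla \phi$ (summed over the $d_1$ components), which by Cauchy--Schwarz gives $u_\epsilon^2 |\nabla u_\epsilon|^2 \le |\phi|^2 |\nabla \phi|^2 \le u_\epsilon^2 |\nabla \phi|^2$, hence the crucial bound $|\nabla u_\epsilon|^2 \le |\nabla \phi|^2$ pointwise.

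Next I would compute $\Delta(u_\epsilon^p)$ by the chain rule and assemble a clean identity for $u_\epsilon^{p-2} \phi \cdot \Delta \phi$. Starting from $\tfrac12 \Delta(u_\epsilon^2) = u_\epsilon \Delta u_\epsilon + |\nabla u_\epsilon|^2 = \phi \cdot \Delta \phi + |\nabla \phi|^2$ and $\Delta(u_\epsilon^p) = p u_\epsilon^{p-1} \Delta u_\epsilon + p(p-1) u_\epsilon^{p-2} |\nabla u_\epsilon|^2$, one gets
\begin{align*}
u_\epsilon^{p-2}\, \phi \cdot \Delta \phi = \tfrac{1}{p} \Delta(u_\epsilon^p) - u_\epsilon^{p-2} \bigl(|\nabla \phi|^2 + (p-2)|\nabla u_\epsilon|^2\bigr).
\end{align*}
Integrating over $\mathbb T^d$, the $\Delta(u_\epsilon^p)$ term vanishes by periodicity, which yields
\begin{align*}
-\int_{\mathbb T^d} u_\epsilon^{p-2}\, \phi \cdot \Delta \phi\, dx = \int_{\mathbb T^d} u_\epsilon^{p-2} |\nabla \phi|^2\, dx + (p-2) \int_{\mathbb T^d} u_\epsilon^{p-2} |\nabla u_\epsilon|^2\, dx.
\end{align*}
For $p \ge 2$ the second term is nonnegative, so I keep only the first. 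For $1 < p < 2$ the second term is negative; here I would use $|\nabla u_\epsilon|^2 \le |\nabla \phi|^2$ to absorb it, producing the factor $1 + (p-2) = p-1$. In both cases we obtain the bound with constant $\min\{1, p-1\}$, but with $u_\epsilon^{p-2}$ in place of $|\phi|^{p-2}$.

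The last step is the limit $\epsilon \to 0^+$. On the left, $u_\epsilon^{p-2} \phi \to |\phi|^{p-2} \phi$ pointwise, and since $\phi \in W^{2,p}$ one may justify passage to the limit by dominated convergence using $u_\epsilon^{p-2}|\phi| \le |\phi|^{p-1}$ together with $\Delta \phi \in L^p$ and $|\phi|^{p-1} \in L^{p'}$. On the right, the integrand $u_\epsilon^{p-2}|\nabla \phi|^2$ converges to $|\phi|^{p-2}|\nabla \phi|^2$ on $\{\phi \ne 0\}$; by Fatou's lemma the lower bound survives in the limit, which accounts for the restriction to $\{\phi \ne 0\}$ in the statement. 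I expect the main obstacle to be the case $1 < p < 2$, where the potential blowup of $|\phi|^{p-2}$ on $\{\phi = 0\}$ must be controlled; the regularization and the pointwise inequality $|\nabla u_\epsilon| \le |\nabla \phi|$ are precisely what make the argument go through, and a standard approximation of $\phi \in W^{2,p}$ by smooth functions takes care of the regularity issues needed to legitimately compute $\Delta(u_\epsilon^p)$ in the sense of distributions.
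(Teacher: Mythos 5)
Your proposal is correct and follows essentially the same route as the paper's (Kato's) proof: the regularization $u_\epsilon=\sqrt{|\phi|^2+\epsilon}$, the observation $|\nabla u_\epsilon|\le|\nabla\phi|$ used to absorb the $(p-2)$-term and produce the constant $\min\{1,p-1\}$, and the limit $\epsilon\to0$ via dominated convergence on the left and Fatou/monotone convergence on the right. The only cosmetic difference is that you obtain the key identity by the chain rule for $\Delta(u_\epsilon^p)$ together with periodicity, whereas the paper integrates by parts against $|\phi_\epsilon|^{p-2}\phi$ (and treats $p>2$ without regularization); the two computations produce the same terms, since $(p-2)|\phi_\epsilon|^{p-2}\frac{\phi_l\phi_j}{|\phi_\epsilon|^2}\partial_k\phi_l\partial_k\phi_j=(p-2)u_\epsilon^{p-2}|\nabla u_\epsilon|^2$.
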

\begin{proof}
We briefly recall Kato's proof. For $p>2$,  we use the identity
\begin{align}
 - \langle |\phi|^{p-2} \phi, \Delta \phi \rangle
& = \int |\phi|^{p-2} |\partial_k \phi_j|^2  + (p-2) |\phi|^{p-2} \frac{\phi_l \phi_j}{|\phi|^2} \partial_k \phi_l
 \partial_k \phi_j  \\
 &\ge  \int |\phi|^{p-2} |\partial_k \phi_j|^2.
 \end{align}
 For $1<p<2$, we use
 \begin{align}
 - \langle |\phi|^{p-2} \phi, \Delta \phi \rangle
= -\lim_{\epsilon \to 0} 
\langle ( |\phi|^2 + \epsilon)^{\frac {p-2}2} \phi, \, \Delta \phi \rangle.
\end{align}
Denote $\phi_{\epsilon} = \sqrt{|\phi|^2+\epsilon}$. Then
(note below $p-2<0$ and $\frac {\phi_l \phi_l} {|\phi_{\epsilon} |^2}$ is bounded by $1$ in matrix
norm)
\begin{align}
 - \langle |\phi_{\epsilon}|^{p-2} \phi, \Delta \phi \rangle
& = \int |\phi_{\epsilon} |^{p-2} |\partial_k \phi_j|^2  + (p-2) |\phi_{\epsilon} |^{p-2} \frac{\phi_l \phi_j}{|\phi_{\epsilon}|^2} \partial_k \phi_l
 \partial_k \phi_j  \\
 &\ge  (p-1) \int |\phi_{\epsilon} |^{p-2} |\partial_k \phi_j|^2
 \ge (p-1) \int_{\phi\ne 0} |\phi_{\epsilon} |^{p-2} | \nabla \phi|^2.
 \end{align}
The result follows from dominated convergence (for the LHS) and monotone convergence
(for the RHS).

\end{proof}

\begin{thm}[Bernstein inequality on the torus, frequency localized version]  \label{thm4.2a}
Let $0<s\le 2$ and consider $\Lambda^s$ on $\mathbb T^d= [-\frac 12, \frac 12]^d$, $d\ge 1$. 
Let $1<p<\infty$. 
For any smooth $f:\mathbb T^d \to \mathbb R$ and any integer $N\ge 2$, we have
\begin{align} \label{e4.83}
\| e^{  - t \Lambda^s } P_N f \|_p \le e^{-c_{p,s,d} N^s t } \|  P_N f \|_p,  \qquad \forall\, t>0.
\end{align}
Here $c_{p,s,d}>0$ depends only on ($p$, $s$, $d$, $\psi$) (Recall $\psi$ is the same cut-off
function used in the
definition of the operator $P_N$).  Consequently 
\begin{align} \label{e4.84}
\int_{\mathbb T^d} (\Lambda^s P_Nf ) |P_N f|^{p-2} P_N f dx \ge \tilde c_{p,s,d}  N^s \|P_N f \|_p^p,
\end{align}
where $\tilde c_{p,s,d}>0$ depends only on ($p$, $s$, $d$, $\psi$).
\end{thm}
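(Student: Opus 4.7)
The bound \eqref{e4.83} will follow from the differential inequality \eqref{e4.84} by a standard Gronwall argument: since $P_N$ is a Fourier projection it commutes with $e^{-t\Lambda^s}$, so $u_t := e^{-t\Lambda^s}P_Nf$ stays in the range of $P_N$, and applying \eqref{e4.84} to $u_t$ gives $\frac{d}{dt}\|u_t\|_p^p \le -p\tilde c_{p,s,d}N^s\|u_t\|_p^p$. Thus the real task is to prove \eqref{e4.84}. My plan is to rescale via $v(y) := u(y/N)$ on the torus $N\mathbb{T}^d$; the relations $\nabla_x u = N\nabla_y v$ and $dx = dy/N^d$ transform \eqref{e4.84} into the uniform-in-$N$ inequality
\[
\int_{N\mathbb{T}^d}(\Lambda^s v)|v|^{p-2}v\,dy \ge c_{p,s,d}\|v\|_{L^p(N\mathbb{T}^d)}^p,
\]
where $v$ has Fourier support in the annulus $|\ell|\in[1,2.02]$.

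I would then adapt the simplified approach used for Theorem \ref{thm3.1} after Lemma \ref{lem3.3}. Setting $g := |v|^{p/2-1}v$, Kato's Lemma \ref{lem_Kato} (for $s=2$) and the Córdoba--Córdoba pointwise bound combined with the integral representation of $\Lambda^s$ (for $0<s<2$) reduce matters to a lower bound of the form $\|\Lambda^{s/2}g\|_2^2 \gtrsim \|v\|_p^p$. The identity $\langle g,v\rangle = \|v\|_{p/2+1}^{p/2+1}$ combined with the Fourier localization of $v$ and Cauchy--Schwarz yields
\[
\|P_{|\ell|\in[1,2.02]}g\|_2 \ge \|v\|_{p/2+1}^{p/2+1}/\|v\|_2,
\]
whence $\|\Lambda^{s/2}g\|_2^2 \gtrsim \|v\|_{p/2+1}^{p+2}/\|v\|_2^2$. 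This closes the argument in the ``Case A'' regime where $\|v\|_{p/2+1}^{p+2}\gtrsim \|v\|_2^2\|v\|_p^p$.

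The main obstacle will be the complementary ``Case B'' in which $\|v\|_{p/2+1}$ is small; note that by Hölder (log-convexity of $L^q$ norms) one has the reverse $\|v\|_{p/2+1}^{p+2}\le \|v\|_2^2\|v\|_p^p$ automatically, so Case B is consistent and cannot be ruled out directly. Mirroring Case 1 of Theorem \ref{thm3.1}'s simplified proof, I would use Lemma \ref{lem3.3} to dominate $\|e^{-t\Lambda^s}v\|_p \le \|e^{-t\Lambda^s}|v|^{p/2}\|_2^{2/p}$ and establish short-time $L^2$-decay for $|v|^{p/2}$. The technical difficulty is that a naive application of Lemma \ref{lem_Small} on $N\mathbb{T}^d$ only gives decay at the rate of the spectral gap, which is $N^{-s}$ and degenerates as $N\to\infty$. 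The key missing ingredient is therefore a concentration-type estimate---parallel in spirit to the one used in the complex-interpolation proof of Theorem \ref{thm3.1}---asserting that a uniform (in $N$) fraction of the $L^2$ mass of $|v|^{p/2}$ sits at frequencies $|\ell|\gtrsim 1$. Establishing this uniform concentration bound, using the band-limitation of $v$ and the fact that $|v|^{p/2-1}v$ must oscillate at the same scale as $v$, will be the crux of the argument.
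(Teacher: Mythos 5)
Your outer reductions are fine: \eqref{e4.83} does follow from \eqref{e4.84} by a Gronwall argument since $P_N$ commutes with the semigroup, the rescaling to $N\mathbb{T}^d$ correctly converts \eqref{e4.84} into an $N$-uniform statement for $v$ band-limited to $|\ell|\in[1,2.02]$, and your ``Case A'' estimate (Kato/Stroock--Varopoulos reduction to $\|\Lambda^{s/2}(|v|^{p/2-1}v)\|_2^2$, then $\langle g,v\rangle=\|v\|_{p/2+1}^{p/2+1}\le\|P_{|\ell|\ge1}g\|_2\|v\|_2$) is correct. But the proposal has a genuine gap exactly where you flag it: ``Case B'' is not handled, and it is not a removable technicality. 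That case is non-vacuous for band-limited functions on a large torus: take $v$ to be a superposition of a wide, low-amplitude modulated wave train (amplitude $A$, envelope measure $\mu$) and a narrow, high-amplitude wave packet (amplitude $a$, measure $\nu$), with $(a/A)^2\ll\mu/\nu\ll(a/A)^p$; then $\|v\|_2^2\approx A^2\mu$, $\|v\|_p^p\approx a^p\nu$, and $\|v\|_{p/2+1}^{p+2}\ll\|v\|_2^2\|v\|_p^p$, so Case A yields nothing. The substitute you sketch---short-time $L^2$ decay of $|v|^{p/2}$ via Lemma \ref{lem_Small}---degenerates because the spectral gap of $N\mathbb{T}^d$ is $N^{-s}$, as you yourself observe, and the ``uniform concentration'' claim you would need (a fixed fraction of the $L^2$ mass of $|v|^{p/2}$ at frequencies $\gtrsim1$, uniformly in $N$ and in $v$) is precisely the hard core of the problem; nothing in your outline establishes it. So as written the argument does not prove the theorem.

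For comparison, the paper's proof of Theorem \ref{thm4.2a} avoids any concentration statement and extracts the $N$-uniformity directly from the frequency localization: for $p>2$ one writes $\|f\|_p^p=\int(\nabla\cdot\nabla\Delta^{-1}f)\,f|f|^{p-2}$, integrates by parts, uses that $\nabla\Delta^{-1}$ acting on frequencies $\sim N$ gains a factor $N^{-1}$, and absorbs the gradient term by Young's inequality to get $-\int\Delta f\,|f|^{p-2}f\gtrsim N^2\|f\|_p^p$; for $1<p<2$ it combines the Bernstein bound $N^p\|f\|_p^p\lesssim\|\nabla f\|_p^p$ with H\"older and Lemma \ref{lem_Kato}. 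The semigroup bound \eqref{e4.83} for $s=2$ then follows from an energy estimate, the range $0<s<2$ from subordination, and \eqref{e4.84} by differentiating at $t=0$. If you wish to salvage your route, the cleanest repair is to replace the Case A/Case B dichotomy by this $\Delta^{-1}$ (Danchin--Planchon) trick; proving your concentration estimate instead would essentially amount to re-proving the theorem, and the mean-zero argument of Theorem \ref{thm3.1} does not transfer because its spectral gap is of size one only on the unit torus.
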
 
\begin{rem}
See \cite{L12} for a proof using a nontrivial perturbation of the L\'evy semigroup near low frequencies.
\end{rem}

\begin{proof}
We follow \cite{CL12}. For the Laplacian case, the idea  is based on an ingenious partial integration trick dating back 
to Danchin \cite{D97} ($p$ being even integers), Planchon \cite{P00}  ($p>2$) and Danchin \cite{D02} ($1<p<2$). 

To simplify the notation we shall write $f= P_N f$ keeping in mind that $f$ is frequency-localized. 
We shall write $\int_{\mathbb T^d} dx$ simply as $\int$. 

Step 1. Laplacian case.  We first show
\begin{align}
- \int \Delta  f |f|^{p-2} f  \gtrsim N^2 \|f \|_p^p.
\end{align}
We first deal with the case $p>2$. We have
\begin{align}
\|f \|_p^p &= \int f^2 |f|^{p-2} = \int (\nabla \cdot \nabla \Delta^{-1} f ) f |f|^{p-2}  \\
& \lesssim \int |\nabla \Delta^{-1} f | | \nabla f | |f|^{p-2} \\
& \le C_\epsilon N^{-2} \int |\nabla f|^2 |f|^{p-2} + {\epsilon} N^2 \int |\nabla \Delta^{-1} f |^2 |f|^{p-2} \\
& \le C_\epsilon N^{-2} \int |\nabla f |^2 |f|^{p-2}
+{\epsilon} \cdot \mathrm{Const} \|  f\|_p^{p}.
\end{align}
Choosing $\epsilon$ to be sufficiently small then yields the result for $p>2$. 
For $1<p<2$,  we use
\begin{align}
N^p \| f \|_p^p & \lesssim \| \nabla f \|_p^p = \int_{f \ne 0} ( |\nabla f |^2 |f|^{p-2} )^{\frac p2}
(|f|^p)^{\frac {2-p} 2} dx \\
& \lesssim  (\int_{f\ne 0} |\nabla f|^2 |f|^{p-2} )^{\frac p2} (\int |f|^p )^{\frac {2-p}2}.
\end{align}
The desired result then follows from Lemma \ref{lem_Kato}.

Step 2: The estimate \eqref{e4.83} in the case $s=2$ follows from Step 1 by examining 
$\frac d{dt} \Bigl( \|e^{-t \Lambda^s }f \|_p^p \Bigr)$ and an energy estimate.  The general case
$0<s<2$ follows from subordination.  The estimate \eqref{e4.84} follows from
differentiating at $t=0$. 
\end{proof}

\section{Liouville theorem for general fractional Laplacian operators}
We now consider the fractional heat equation of the form 
\begin{align} \label{ancient_eq_00}
\partial_t u = - \Lambda^s u, \quad (t,x) \in (-\infty, 0] \times \mathbb R^d.
\end{align}
Here $\Lambda^s = (-\Delta)^{s/2}$ is the fractional Laplacian of order $s$, and we assume
$s>0$. Note that for $0<s\le 2$ the corresponding semigroup has positivity but this is no longer
the case for $s>2$, i.e. the higher order Laplacians. 

\begin{thm} \label{thm5.1}
Suppose $u$ is an ancient solution  to \eqref{ancient_eq_00} satisfying
\begin{align*}
|u(t,x)| \le \frac {C} {|x|^a}, \quad \forall\, (t,x) \in (-\infty,0] \times \mathbb R^d,
\end{align*}
where $0<a<d$ and $C>0$ are constants. Then $u$ must be identically zero. 
\end{thm}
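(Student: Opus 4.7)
The plan is to exploit the ancient nature of $u$ via the semigroup representation
\begin{equation*}
u(t_1,\cdot)=e^{-\tau\Lambda^s}u(t_1-\tau,\cdot),\qquad t_1\le 0,\ \tau>0,
\end{equation*}
and let $\tau\to\infty$. Since $|u(t,x)|\le C|x|^{-a}$ with $0<a<d$, every time slice is locally integrable (the singularity at the origin is integrable precisely because $a<d$) and polynomially bounded at infinity, hence a tempered distribution in $x$; Fourier transforming the equation then gives $\widehat u(t_1,\xi)=e^{-\tau|\xi|^s}\widehat u(t_1-\tau,\xi)$, which makes the identity above rigorous. Pairing against an arbitrary $\phi\in\mathcal{S}(\mathbb R^d)$ and using that the multiplier $e^{-\tau|\xi|^s}$ is real (so $e^{-\tau\Lambda^s}$ is formally self-adjoint), one gets
\begin{equation*}
|\langle u(t_1),\phi\rangle|=|\langle u(t_1-\tau),e^{-\tau\Lambda^s}\phi\rangle|\le C\int_{\mathbb R^d}|x|^{-a}|(e^{-\tau\Lambda^s}\phi)(x)|\,dx.
\end{equation*}
If this right-hand side vanishes as $\tau\to\infty$, then $\langle u(t_1),\phi\rangle=0$ for every Schwartz $\phi$, so $u(t_1)\equiv 0$; since $t_1\le 0$ is arbitrary, $u\equiv 0$.

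The main analytic input I would prove is the pointwise bound
\begin{equation*}
|K_s(\tau,x)|\lesssim \frac{\tau}{(\tau^{1/s}+|x|)^{d+s}},\qquad \|K_s(\tau,\cdot)\|_{L^1(\mathbb R^d)}\lesssim 1,
\end{equation*}
where $K_s(\tau,\cdot)$ is the convolution kernel of $e^{-\tau\Lambda^s}$. By the scaling $K_s(\tau,x)=\tau^{-d/s}K_s(1,\tau^{-1/s}x)$, this reduces to $|K_s(1,x)|\lesssim \langle x\rangle^{-d-s}$, which, when $\sin(\pi s/2)\ne 0$, is exactly the content of Lemma \ref{lem_s2} (together with its one-dimensional counterpart Lemma \ref{lem_s1}); for $s$ an even positive integer, $e^{-|\xi|^s}$ is itself Schwartz and $K_s(1,\cdot)$ decays faster than any polynomial, so the bound is even easier. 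Given these kernel estimates, I would split the integral at $|x|=\tau^{1/s}$. On $|x|\le\tau^{1/s}$, using $\|e^{-\tau\Lambda^s}\phi\|_\infty\lesssim \tau^{-d/s}\|\phi\|_1$ together with $\int_{|x|\le \tau^{1/s}}|x|^{-a}dx\lesssim \tau^{(d-a)/s}$---the one place the hypothesis $a<d$ is used---gives a near-field contribution of order $\tau^{-a/s}$. On $|x|>\tau^{1/s}$, splitting the convolution at $|y|=|x|/2$ yields $|(e^{-\tau\Lambda^s}\phi)(x)|\lesssim \tau|x|^{-d-s}+|x|^{-N}$ for any $N$, which again integrates against $|x|^{-a}$ to $O(\tau^{-a/s})$. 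Since $a>0$, the total is $o(1)$ as $\tau\to\infty$.

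The main obstacle is the uniform kernel bound $|K_s(1,x)|\lesssim \langle x\rangle^{-d-s}$ across all $s>0$, because Lemma \ref{lem_s2} isolates only the leading asymptotic and its constant $\sin(\pi s/2)$ vanishes at $s\in 2\mathbb N$; however, in these exceptional cases $e^{-|\xi|^s}$ is smooth, so standard integration by parts in the Fourier inversion integral yields arbitrary polynomial decay, and the upper bound---rather than the sharp two-sided asymptotic---is all the argument requires. A minor technical point is justifying the semigroup identity for merely tempered ancient data, which is immediate on the Fourier side once one observes that $\widehat u(t,\cdot)$ is a tempered distribution whose time evolution is dictated pointwise in $\xi$ by the ODE $\partial_t \widehat u=-|\xi|^s \widehat u$.
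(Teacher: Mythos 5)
Your proposal is correct and rests on the same mechanism as the paper's proof: write $u$ at a fixed time as the semigroup $e^{-\tau\Lambda^s}$ applied to the solution at time $-\tau$, test against smooth functions to compensate for the fact that $u$ itself lies in no single $L^p$, and let $\tau\to\infty$ using decay of the semigroup. The only real difference is in bookkeeping: the paper mollifies, setting $u_\phi=\phi*u\in L^p$ for $p>d/a$, and invokes the $L^p\to L^\infty$ smoothing estimate, whereas you move the semigroup onto the test function by duality and prove the kernel bound $|K_s(\tau,x)|\lesssim \tau\,(\tau^{1/s}+|x|)^{-d-s}$ (via Lemmas \ref{lem_s1}--\ref{lem_s2}, with the even-integer case handled by Schwartz decay) to get the explicit rate $\tau^{-a/s}$ — essentially supplying the "usual decay estimates" that the paper cites without proof.
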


\begin{proof}
Take any $\phi \in C_c^{\infty}(\mathbb R^d)$ and consider
$u_{\phi} = \phi* u$.  By splitting into $|y|\le 1$ and $|y|>1$ respectively, it is not difficult to check that $u_{\phi}$ is smooth and
$u_{\phi} \in L^p$ for any $\frac d{a}<p<\infty$. Fix any $t_0 \in (-\infty, 0]$. We then
have $u_{\phi}(t_0)=e^{-(t_0-t) \Lambda^s } u_{\phi} (t)$ for any $t<t_0$. By sending $t$
to $-\infty$ and invoking the usual decay estimates (for the kernel $e^{-\tau \Lambda^s}$), i.e.
\begin{align*}
\| u_{\phi}(t_0)\|_{\infty} \lesssim (t_0-t)^{-\frac ds\cdot \frac 1 {p-1}} \| u_{\phi}(t)\|_p  \lesssim 
(t_0-t)^{-\frac ds\cdot \frac 1 {p-1}},
\end{align*}
we obtain
$\| u_{\phi}(t_0)\|_{\infty}=0$. Thus $u_{\phi}(t_0)=0$ for any $\phi \in C_c^{\infty}$. This
 implies that $u$ must be identically zero.
\end{proof}
\begin{rem}
The hypothesis that $|u|\lesssim |x|^{-a}$ can be replaced by the more general condition 
that $$\sup_{t} \|u(t) \|_{ L^{p_1} + L^{p_2}}<\infty$$ for some $1\le p_1,p_2<\infty$. 
\end{rem}
\begin{rem}
The rigidity result Theorem \ref{thm5.1} plays an important role in e.g. the infinite-time
blow up problem for the half-harmonic map flow from $\mathbb R$ to $\mathbb S^1$
\cite{SWZ21}.
\end{rem}

\appendix
\section{Computation of the contour integral}
In this appendix we show \eqref{2.11A}. Recall that 
$f(x) =\log (1+x^2)$, $f^{\prime}(x) = \frac {2x}{1+x^2}$, $f^{\prime\prime}(x)
=2\frac {1-x^2}{(1+x^2)^2}$ and we need to show
\begin{align} \label{A.102}
3 \int f^2 (f^{\prime\prime})^2 =12 \int_{-\infty}^{\infty}
(\log (1+x^2) )^2  \frac {(1-x^2)^2} {(1+x^2)^4} dx =
-\frac {29}6 \pi + \pi^3 + (\log 4) (-7+ \log 64) \pi.
\end{align}

For this we need to compute
\begin{align}
I_j= \int_{-\infty}^{\infty} (\log (1+x^2) )^2 \frac 1 {(1+x^2)^j} dx, \quad j=1,\cdots,4.
\end{align}

We shall proceed in several steps. 

Step 1. Preliminary reduction. Observe that
\begin{align}
I_{n+1} & = \int_{-\infty}^{\infty} (\log (1+x^2) )^2
\frac { 1+x^2 - x^2} {(1+x^2)^{n+1} } dx \\
& = I_n + \int_{-\infty}^{\infty} ( \log (1+x^2) )^2
\cdot \frac x {2n} \frac d {dx}  \Bigl(  (1+x^2)^{-n} \Bigr) dx \\
& =(1-\frac 1 {2n} ) I_n - \frac 2 n \int_{-\infty}^{\infty}
\frac {x^2} {(1+x^2)^{n+1} } \log(1+x^2) dx \\
& = (1-\frac 1{2n} )I_n 
- \frac 2 n \Bigl(\int_{-\infty}^{\infty} (1+x^2)^{-n} \log(1+x^2) dx
-\int_{-\infty}^{\infty} (1+x^2)^{-n-1} \log (1+x^2) dx \Bigr).
\end{align}

By using the above iterative relation, to compute $I_n$ for all $n$, it suffices for us to compute
\begin{align}
I_1 = \int_{-\infty}^{\infty} (\log (1+x^2) )^2 \frac 1 {1+x^2} dx
\end{align}
and 
\begin{align}
F_n = \int_{-\infty}^{\infty} (1+x^2)^{-n} \log (1+x^2) dx, \qquad n=1,\cdots, 4.
\end{align}

Step 2. Computation of $I_1$.  We shall perform a contour integral
computation.

For $z=\rho e^{i\theta}$ with $-\pi \le \theta <\pi$, we denote
\begin{align}
\mathrm{Log} z = \log \rho + i \theta.
\end{align}
In yet other words we use the standard principal branch of the multi-valued function $\log z$ with
argument in $[-\pi, \pi)$.  By a slight abuse of notation, we shall write $\mathrm{Log} z$
simply as $\log z$. 

Denote $g(z) = (1+z^2)^{-1}$. Note that $g$ has poles at $z=\pm i$.
First we observe that
\begin{figure}[!h]
\includegraphics[width=0.4\textwidth]{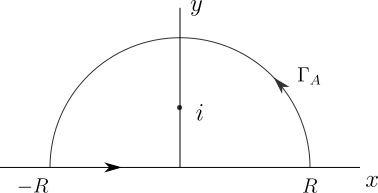}
\caption{Contour $\Gamma_A$}\label{fig:5}
\end{figure}

\begin{align}
&\int_{-\infty}^{\infty}
( \log (x+i) )^2  g(x) dx = 
\lim_{R\to \infty} \int_{\Gamma_A} (\log (z+i) )^2 g(z) dz =
2\pi i  \mathrm{Res} ( (\log (z+i) )^2 g(z); i ).  \label{100a}
\end{align}

By using our choice of the branch cut for
the logarithm function, we have for $x>0$
\begin{align}
& \log (x+i) = \frac 12 \log (x^2+1) + i\theta_x, \qquad \theta_x= \frac {\pi}2-
\arctan x; \\
& \log(-x+i)= \frac 12 \log (x^2+1) +i (\pi-\theta_x).
\end{align}

Thus \eqref{100a} becomes
\begin{align}
\int_0^{\infty}  \frac {\frac 12 (\log(1+x^2) )^2} {1+x^2} dx
-\int_0^{\infty} \frac { \theta_x^2 +(\pi-\theta_x)^2} {1+x^2} dx
=\mathrm{Re} \Bigl( 2\pi i  \mathrm{Res} ( (\log (z+i) )^2 g(z); i ) \Bigr).
\end{align}
This implies
\begin{align}
{I_1=2\int_0^{\infty} \frac {(\log(1+x^2) )^2} {1+x^2} dx
=4 \frac {\pi^3} 3 + 4 \mathrm{Re} \Bigl( 2\pi i  \mathrm{Res} ( (\log (z+i) )^2 g(z); i ) \Bigr).}
\end{align}
Since $\mathrm{Res}((\log (z+i) )^2 g(z); i)=\frac 18 i (\pi -2i \log 2)^2$, we obtain
\begin{align}
\boxed{I_1=\frac 13 \pi^3+4\pi (\log 2)^2.
}
\end{align}

Step 3. Computation of $F_n$.  This is analogous to the previous step. Note that
\begin{align}
\int_{-\infty}^{\infty} \frac {\log (x+i)} {(1+x^2)^n} dx
= 2\pi i \mathrm{Res} ( \log(z+i) (1+z^2)^{-n}; i ).
\end{align}
This yields
\begin{align}
\int_0^{\infty} 
\frac { \frac 12 \log(x^2+1) + i\theta_x} {(1+x^2)^n} dx
+ \int_0^{\infty} \frac {\frac 12 \log (x^2+1) +i(\pi -\theta_x) }
{ (1+x^2)^n} dx = 2\pi i \mathrm{Res} ( \log(z+i) (1+z^2)^{-n}; i ).
\end{align}
Thus
\begin{align}
F_n = 2 \int_0^{\infty} \frac {\log (x^2+1)} {(1+x^2)^n} dx
=4\pi \mathrm{Re} \Bigl( i \mathrm{Res} ( \log(z+i) (1+z^2)^{-n}; i )
\Bigr).
\end{align}
We obtain for $n=1,\cdots, 4$,
\begin{align}
&F_1 =\pi \log 4, \qquad F_2=\pi(-\frac 12 +\log 2); \\
&F_3=\pi(-\frac 7 {16}+\frac 34\log 2);\qquad F_4=
\pi(-\frac {37}{96}+\frac 58 \log 2).
\end{align}

Step 4. Verification of \eqref{A.102}.  Clearly
\begin{align}
\text{LHS of \eqref{A.102}}= 12 I_2 - 48 I_3 +48 I_4.
\end{align}
By using Step 1, we have
\begin{align}
I_{n+1} = (1-\frac 1{2n} )I_n - \frac 2n (F_n- F_{n+1}).
\end{align}
Clearly
\begin{align}
I_2=\frac 12 I_1 -2 (F_1- F_2) =
-\pi +\frac 1 6 \pi^3 +\frac 12 \pi (-2+\log 4) \log 4.
\end{align}
Similarly
\begin{align}
&I_3= \frac 1 {16} \pi (-11 + 2\pi^2
+ (\log 16) (-7+\log 64) ); \\
&I_4=\frac 1 {288}
\pi \Bigl(-155 + 30\pi^2 + 6 (\log 4) (-37+15 \log 4) \Bigr).
\end{align}
The identity \eqref{A.102} then follows easily.

\end{document}